\newtheorem{Theorem}{Theorem}[section]
\newtheorem{Proposition}[Theorem]{Proposition}
\newtheorem{Lemma}[Theorem]{Lemma}
\newtheorem{Corollary}[Theorem]{Corollary}
\theoremstyle{definition}
\newtheorem{Definition}[Theorem]{Definition}
\newtheorem{Remark}[Theorem]{Remark}
\newcommand{\bTheorem}[1]{
\begin{Theorem} \label{T#1} }
\newcommand{\eT}{\end{Theorem}}
\newcommand{\bProposition}[1]{
\begin{Proposition} \label{P#1}}
\newcommand{\eP}{\end{Proposition}}
\newcommand{\bLemma}[1]{
\begin{Lemma} \label{L#1} }
\newcommand{\eL}{\end{Lemma}}
\newcommand{\bCorollary}[1]{
\begin{Corollary} \label{C#1} }
\newcommand{\eC}{\end{Corollary}}
\newcommand{\bRemark}[1]{
\begin{Remark} \label{R#1} }
\newcommand{\eR}{\end{Remark}}
\newcommand{\bDefinition}[1]{
\begin{Definition} \label{D#1} }
\newcommand{\eD}{\end{Definition}}
\newcommand{\vw}{\vc{w}}
\newcommand{\bfq}{\mathbf{q}}
\newcommand{\bfg}{\mathbf{g}}
\newcommand{\bfe}{\mathbf{e}}
\newcommand{\bfn}{\mathbf{n}}
\newcommand{\bfF}{\mathbf{F}}
\newcommand{\bfphi}{\boldsymbol{\varphi}}
\newcommand{\bFormula}[1]{
\begin{equation} \label{#1}}
\newcommand{\eF}{\end{equation}}
\newcommand{\Ov}[1]{\overline{#1}}
\newcommand{\aleq}{\stackrel{<}{\sim}}
\newcommand{\Un}[1]{\underline{#1}}
\newcommand{\vr}{\varrho}
\newcommand{\tvr}{\tilde \vr}
\newcommand{\tvu}{{\tilde \vu}}
\newcommand{\tvt}{\tilde \vt}
\newcommand{\vt}{\vartheta}
\newcommand{\vu}{\vc{u}}
\newcommand{\vm}{\vc{m}}
\newcommand{\vc}[1]{{\bf #1}}
\newcommand{\Div}{{\rm div}_x}
\newcommand{\Grad}{\nabla_x}
\newcommand{\Dt}{\frac{\rm d}{{\rm d}t}}
\newcommand{\dx}{\,{\rm d} {x}}
\newcommand{\dt}{\,{\rm d} t }
\newcommand{\vU}{\vc{U}}
\newcommand{\vV}{\vc{V}}
\newcommand{\intO}[1]{\int_{\Omega} #1 \ \dx}
\newcommand{\intOh}[1]{\int_{\Omega_h} #1 \ \dx}
\newcommand{\cg}{\color{black}}
\def\softd{{\leavevmode\setbox1=\hbox{d}%
          \hbox to 1.05\wd1{d\kern-0.4ex{\char039}\hss}}}
\newcommand{\defeq}{\vcentcolon=}
\newcommand{\di}{\textnormal{div}_x\,}
\newcommand{\dih}[1]{\left(\textnormal{div}_h\,#1\right)_K}
\newcommand{\diht}[1]{\left(\widetilde{\textnormal{div}_h}\,#1\right)_K}
\newcommand{\laph}[1]{\left(\Delta_h\,#1\right)_K}
\newcommand{\laphs}[1]{\left(\Delta^s_h\,#1\right)_K}
\newcommand{\pd}[2]{\partial_{#2}\, #1}
\newcommand{\DDt}[3]{\frac{\textnormal{d}^{#1}{#2}}{\textnormal{d}{#3}^{#1}}}
\newcommand{\grad}{\nabla_x\, }
\newcommand{\bfQ}{\mathbf{Q}}
\newcommand{\bff}{\mathbf{f}}
\newcommand{\bfG}{\mathbf{G}}
\newcommand{\cv}{1/(\gamma-1)}
\newcommand{\avrgK}[1]{(\widetilde{#1})^s_{K}}
\newcommand{\avrg}[1]{(\Ov{#1})_{\sigma}}
\newcommand{\jump}[1]{\llbracket #1 \rrbracket_{\sigma}}
\newcommand{\n}[2]{\left\lVert{#2}\right\rVert_{#1}}
\newcommand{\grid}{\mathcal{T}_h}
\newcommand{\elem}{K}
\newcommand{\elemL}{L}
\newcommand{\edge}{\mathcal{E}}
\newcommand{\bndr}{\sigma,s\pm}
\newcommand{\bv}[1]{{#1}_{\sigma}}
\newcommand{\dv}[1]{{#1}_{K}}
\newcommand{\dvpp}[1]{{#1}_{L}}
\newcommand{\dvmm}[1]{{#1}_{J}}
\newcommand{\dvxp}[1]{{#1}_{i+\frac{1}{2},j}}
\newcommand{\dvxm}[1]{{#1}_{i-\frac{1}{2},j}}
\newcommand{\dvyp}[1]{{#1}_{i,j+\frac{1}{2}}}
\newcommand{\dvym}[1]{{#1}_{i,j-\frac{1}{2}}}
\newcommand{\fluxder}[1]{\left(\partial_h^s {#1}\right)_K}
\newcommand{\der}[1]{\left(\widetilde{\partial_h^s} {#1}\right)_{K}}
\newcommand{\derp}[1]{\left(\partial_h^{s+} {#1}\right)_{K}}
\newcommand{\derm}[1]{\left(\partial_h^{s-} {#1}\right)_{K}}
\newcommand{\derb}[1]{\left(\partial_h^{s+} {#1}\right)_{\sigma}}
\newcommand{\intOT}[1]{\int_{0}^T {#1}\dt }
\newcommand{\intOhi}[1]{\int_{K}{#1}\dx}
\definecolor{Cgrey}{rgb}{0.85,0.85,0.85}
\definecolor{Cblue}{rgb}{0.50,0.85,0.85}
\definecolor{Cred}{rgb}{1,0,0}
\definecolor{fancy}{rgb}{0.10,0.85,0.10}
\newcommand\Cbox[2]{%
    \newbox\contentbox%
    \newbox\bkgdbox%
    \setbox\contentbox\hbox to \hsize{%
        \vtop{
            \kern\columnsep
            \hbox to \hsize{%
                \kern\columnsep%
                \advance\hsize by -2\columnsep%
                \setlength{\textwidth}{\hsize}%
                \vbox{
                    \parskip=\baselineskip
                    \parindent=0bp
                    #2
                }%
                \kern\columnsep%
            }%
            \kern\columnsep%
        }%
    }%
    \setbox\bkgdbox\vbox{
        \color{#1}
        \hrule width  \wd\contentbox %
               height \ht\contentbox %
               depth  \dp\contentbox
        \color{black}
    }%
    \wd\bkgdbox=0bp%
    \vbox{\hbox to \hsize{\box\bkgdbox\box\contentbox}}%
    \vskip\baselineskip%
}
\begin{document}


\title{Convergence of finite volume schemes for the Euler equations via dissipative
measure--valued solutions}

\author{Eduard Feireisl
\thanks{The research of E.F. and H.M.~leading to these results has received funding from the
European Research Council under the European Union's Seventh
Framework Programme (FP7/2007-2013)/ ERC Grant Agreement
320078. The Institute of Mathematics of the Academy of Sciences of
the Czech Republic is supported by RVO:67985840.} \and M\' aria Luk\' a\v cov\' a-Medvi\softd ov\'a
\thanks{The research of M.L. was supported by the German Science Foundation under the Collaborative Research Centers TRR~146 and TRR~165.}
 \and Hana Mizerov\' a \footnotemark[1]
}

\date{\today}

\maketitle

\bigskip

\centerline{$^*$ Institute of Mathematics of the Academy of Sciences of the Czech Republic}

\centerline{\v Zitn\' a 25, CZ-115 67 Praha 1, Czech Republic}

\centerline{feireisl@math.cas.cz}
\centerline{mizerova@math.cas.cz}

\bigskip

\medskip
\centerline{$^\dagger$ Institute of Mathematics, Johannes Gutenberg-University Mainz}

\centerline{Staudingerweg 9, 551 28 Mainz, Germany}

\centerline{lukacova@uni-mainz.de}

\begin{abstract}
The Cauchy problem for the complete Euler system is in general ill posed in the class of admissible (entropy producing)
weak solutions. This suggests there might be sequences of approximate solutions that develop fine scale oscillations. Accordingly, the concept of \emph{measure--valued solution} that capture possible oscillations is more suitable for analysis.
We study the convergence of a class of entropy stable finite volume schemes for the barotropic  and complete compressible Euler equations
in the multidimensional case. We establish suitable stability and consistency estimates and show that the
Young measure generated by numerical solutions represents a dissipative measure--valued
solution of the Euler system. Here dissipative means that a suitable form of the Second law of thermodynamics is incorporated in the definition of the measure--valued solutions. In particular, using the recently
established weak-strong uniqueness principle,  we
show that the numerical solutions converge pointwise to the regular solution of the limit systems
at least on the lifespan of the latter.
\end{abstract}

\bigskip
{\bf Keywords:} compressible Euler equations,  entropy stable finite volume scheme, entropy stability, convergence, dissipative measure--valued solution

{\bf AMS subject classifications:} 65M08, 76N10, 35L65, 35R06

\tableofcontents

\section{Introduction}

The Euler equations of compressible fluid flow
represent the simplest possible model that incorporates all fundamental principles of thermodynamics including the Second law
usually expressed in terms of the entropy balance appended as an \emph{admissibility condition} to the system. The entropy
should be produced by any physically realistic process and this criterion is supposed to rule out the unphysical solutions
that may still satisfy the basic system in the sense of distributions. In addition, the entropy balance provides
crucial \emph{a priori} bounds, in particular, positivity of the pressure when the system is written in the so--called conservative
variables.

Another characteristic feature of the Euler system
is that discontinuities may develop after a finite time even if the initial data are smooth. It is therefore quite natural to look for a weaker representation of solutions,
for instance the weak solutions that satisfy the underlying equations in the sense of distributions, see
\cite{Dafermos00, GR96, K97, LV02, S99} and the references therein.
It is also a well-known fact that such weak solutions may fail to be unique, and, consequently, the Second law of thermodynamics has been
proposed as a selection criterion. Although the entropy production principle has been efficient
in the case of scalar multidimensional hyperbolic conservation laws as well as the one-dimensional systems, see
\cite{kr, glimm, bressan1, bressan}, it completely fails in the multidimensional setting.
Recently, it has been shown by De Lellis and Sz\'ekelyhidi \cite{dlsz1, dlsz2} and Chiodaroli et al.~\cite{kreml} that infinitely many weak entropy solutions can be constructed for the multidimensional barotropic Euler equations. These  so-called wild solutions seem to behave unphysically as they may produce energy. These results has been extended in  al.~\cite{FKKM17} to the complete multidimensional Euler system in the class of $L^\infty$ weak admissible solutions. In particular, these solutions satisfy the energy balance together with the entropy inequality; whence they are compatible with both the First and the Second law of thermodynamics.

Inspired by the previous results as well as by the numerical analysis performed in \cite{FMT16},
we examine stability and convergence of certain numerical schemes in the class of so-called
{\em dissipative measure--valued solutions}, see Section~2 and 3. The concept of measure-valued solutions for conservation laws is not new, see, e.g., \cite{Diperna83, Diperna85, SW12, BF, FKKM17, KronZaja} and the references therein.
However, the recently introduced class of dissipative measure-valued solutions is particularly suitable since the weak-strong uniqueness holds and the dissipative measure-valued solution coincides with the classical solution as far as the latter exists \cite{GSWW, BF, BreFei17B, FKKM17}. Similar concept has been adopted by Tzavaras et al.
\cite{DeStTz}, \cite{CrGaTz},
in the context of elastodynamics, thermoelasticity, and other related
problems.

As is well known, the entropy stability of a numerical scheme plays a crucial role
in the convergence analysis of numerical solutions. Construction of entropy conservative schemes has been introduced by Tadmor in a seminal paper \cite{Tad87}. This concept has been later used to study entropy stability of numerical schemes, we refer the reader to
\cite{ABBSM16, BB02, B03, CL96, FMT16, FKMT17, LMR02, Tad03} and the references therein.

There is a considerable body of literature dealing with the convergence of numerical schemes for multidimensional hyperbolic conservation laws. Though the chosen techniques depend on the assumptions imposed on exact solutions,
a certain form of the discrete entropy inequality is indispensable.
Let us mention for example the results of Bouchut and Betherlin \cite{BB02, B03, FB05}, where the kinetic flux-splitting method has been used. Relying on the fully discrete entropy inequality and applying the method of DiPerna  \cite{Diperna83} and Tartar's results on compensated compactness they proved strong convergence of fully discrete kinetic flux-splitting scheme to the bounded weak entropy solution of isentropic Euler equations (or the shallow water equations \cite{BL17}) provided numerical solutions satisfies $L^\infty$-bounds and the vacuum does not appear.

In \cite{JR} Jovanovi\'{c} and Rohde assumed the existence of a classical solution to the Cauchy problem of a general multidimensional hyperbolic conservation law.
Applying the stability result for classical solutions in the class of entropy solutions due to Dafermos
\cite{Dafermos79} and DiPerna's method \cite{Diperna79, Diperna83}, they derived  error estimates for the explicit finite volume schemes satisfying the discrete entropy inequality and thus proved that the  numerical solutions convergence strongly to the exact classical solution.

In view of the fact that the classical solutions of hyperbolic conservation laws may not exist in general and in view of the recent results on non-uniqueness of weak entropy solutions \cite{dlsz1, dlsz2, kreml}, Fjordholm, Mishra and Tadmor revisited recently the question of convergence and proved that the semi-discrete entropy stable finite volume schemes converge to the measure--valued solutions provided numerical solutions satisfy $L^\infty$-bounds, coefficients of numerical viscosity are uniformly bounded from below by a positive constant  and the entropy Hessian is strictly positive definite, see \cite{fjdiss, FKMT17, FMT16}.

In contrast with the above works that are mostly devoted to general hyperbolic systems, we focus on the specific problems
in fluid mechanics represented through the complete Euler system, or its simplified barotropic analogue. Our framework are the
dissipative measure--valued solutions introduced in \cite{BF, BreFei17B, FKKM17}, see also the related numerical study for the isentropic Navier-Stokes equations~\cite{FL17}. In comparison with the previously used concept of measure--valued solutions, the existence of which is conditioned by mostly rather unrealistic assumptions of boundedness of certain physical quantities and the corresponding fluxes, the new framework accommodates the solutions generated by approximate sequences satisfying only the general
energy bounds.
Indeed, assuming only uniform lower  bound on the density and uniform upper bound on the energy
we show that the Lax-Friedrichs-type finite volume schemes generate the dissipative measure--valued solutions to the complete Euler equations.

The rest of the paper is organized as follows. In Section~2 we introduce the class of dissipative measure--valued (DMV) solutions to the barotropic and complete Euler systems and formulate the corresponding (DMV)-- strong uniqueness results. In Section~3 we recall a general concept of entropy stable finite volume schemes and introduce the local and global Lax-Friedrichs-type finite volume methods for the barotropic and complete Euler systems, respectively. Positivity of the pressure is studied in Section~4. Sections~5 and 6 are devoted to the stability and consistency of our numerical schemes. Finally, the limiting process is studied in Section~7. We will show that the numerical solutions generate a weakly-$(*)$ convergent subsequence and the Young measure that represents a (DMV) solution to the corresponding Euler system. Moreover, employing the (DMV)-- strong uniqueness principle, we will obtain strong (pointwise) convergence to the unique classical solution as long as the latter exists.

\section{Measure--valued solutions for the Euler system}
\label{m}

We consider the complete
Euler system describing the time evolution of a general compressible fluid and its isentropic (or more general barotropic) analogue that may be seen as the particular case when the entropy of the system is constant. We start with the simpler barotropic system. For the sake of simplicity, we will \emph{systematically} use the space--periodic boundary conditions
throughout the whole text. This means the underlying spatial domain can be identified with the flat torus
\begin{equation} \label{M2}
\Omega =\big( [0,1]|_{\{0,1\}}\big)^N,\ N=1,2,3.
\end{equation}
Note that, in this geometry, the physically more relevant impermeability condition
\[
{\bf u} \cdot \vc{n} |_{\partial \Omega} = 0
\]
can be accommodated in a direct fashion, see Ebin \cite{EB}.

\subsection{Measure--valued solutions for the barotropic Euler system}

Neglecting the influence of temperature fluctuations we can describe the motion of a compressible fluid by means of only
two basic state variables, the mass density $\vr = \vr(t,x)$ and the velocity field $\vu = \vu(t,x)$.
The resulting \emph{barotropic Euler system}
reads
\begin{equation} \label{M1}
\begin{split}
\partial_t \vr + \Div (\vr \vu) &= 0, \\
\partial_t (\vr \vu) + \Div (\vr \vu \otimes \vu) + \Grad p(\vr) &= 0,
\end{split}
\end{equation}
where $p = p(\vr)$ is the pressure. In what follows we focus on the isentropic pressure-density state equation
\begin{align}\label{pressure_bE}
p(\vr)=a\vr^{\gamma},\ \gamma >1.
\end{align}
Moreover, it is more convenient to study (\ref{M1}) in the \emph{conservative variables} $[\vr, \vc{m} = \vr \vu]$:
\begin{equation} \label{M1c}
\begin{split}
\partial_t \vr + \Div \vc{m} &= 0, \\
\partial_t \vc{m} + \Div \left( \frac{\vc{m} \otimes \vc{m} }{\vr} \right) + \Grad p(\vr) &= 0.
\end{split}
\end{equation}
Here, the well known problem is that there are basically no {\it a priori} bounds for the velocity itself but rather for the momentum $\vc{m}$. To recover $\vu$, a lower bound on $\vr$ must be available. We will discuss this issue later in Section \ref{S:positive}.

\subsubsection{Weak formulation}

The \emph{weak formulation} of problem (\ref{M1}), (\ref{M2}) written in the conservative variables reads:
\begin{equation} \label{M3}
\begin{split}
\left[ \intO{ \vr \varphi } \right]_{t = 0}^{t = \tau} &=
\int_0^\tau \intO{ \left[ \vr \partial_t \varphi + \vc{m} \cdot \Grad \varphi \right] } \dt\\
\mbox{for any}\ \tau &\in [0,T],\  {{\varphi \in C^1([0,T] \times \Omega)}};\\
\left[ \intO{ \vc{m} \cdot \bfphi } \right]_{t = 0}^{t = \tau} &=
\int_0^\tau \intO{ \left[ \vc{m} \cdot \partial_t \bfphi + \frac{ \vc{m} \otimes \vc{m} }{\vr} : \Grad \bfphi + p(\vr) \Div \bfphi \right] } \dt\\
\mbox{for any}\ \tau &\in [0,T],\
{{ \bfphi \in C^1([0,T] \times \Omega; R^N).
}}
\end{split}
\end{equation}

\begin{Remark} \label{R1}

Note that the weak formulation (\ref{M3}) already includes satisfaction of the initial conditions
\begin{equation} \label{M4}
\vr(0, \cdot) = \vr^0, \ \vc{m}(0, \cdot) = \vc{m}^0.
\end{equation}

\end{Remark}

Let
\begin{align}\label{ppot}
P(\vr) \equiv \vr \int_1^\vr \frac{p(z)}{z^2} \ {\rm d}z
\end{align}
be the so--called pressure potential.
The weak formulation (\ref{M3}), (\ref{M4}) is usually supplemented by the \emph{energy inequality}
\begin{equation*} 
\begin{split}
&\left[ \intO{ \left( \frac{1}{2} \frac{ |\vc{m}|^2 }{\vr} + P(\vr) \right) \varphi }  \right]_{t=0}^{t = \tau} \\ &\leq
\int_0^\tau \intO{ \left[ \left( \frac{1}{2} \frac{ |\vc{m}|^2 }{\vr} + P(\vr) \right) \partial_t \varphi
+ \left( \frac{1}{2} \frac{ |\vc{m}|^2 }{\vr} + P(\vr) \right) \frac{\vc{m}}{\vr} \cdot \Grad \varphi
+ p(\vr) \frac{\vc{m}}{\vr} \cdot \Grad \varphi
\right] } \dt
\end{split}
\end{equation*}
for a.a. $\tau \in [0,T]$ and any {{$\varphi \in C^1([0,T] \times \Omega)$, $\varphi \geq 0$.}}

It is easy to deduce, taking $\varphi \equiv 1$ in the first equation in (\ref{M3}), that the total mass,
\[
\intO{ \vr(\tau, \cdot) } = \intO{ \vr^0 },\ \tau \in [0,T]
\]
is a conserved quantity. In particular, one may replace $P$, given by (\ref{ppot}), by
\[
\frac{a}{\gamma - 1} \vr^\gamma
\]
in the energy inequality as long as the flow is isentropic.

\subsubsection{Measure--valued solutions}

The concept of \emph{measure--valued solution} to (\ref{M1c}) was introduced by Gwiazda, \'Swierczewska-Gwiazda, and
Wiedemann \cite{GSWW} in the framework of Alibert and Bouchitt\'e \cite{AliBou}.
There is also a general framework for hyperbolic system admitting $L^\infty-$ {\it a priori} bounds by
Brenier et al. \cite{BrDeSz}.
Here, we prefer a simpler and more versatile approach proposed
in \cite{FGSWW1}. Although the measure valued solutions are generally thought of as Young measures, with the associated concentration defect,
associated to sequences of approximate/exact solutions, we do not insist on this interpretation and introduce (DMV) solutions  as objects independent of any approximating sequence.

\begin{Definition} \label{D1}

Let
\[
\mathcal{F}= \left\{ [\vr, \vc{m} ] \ \Big| \ \vr \geq 0, \ \vc{m} \in R^N \right\}.
\]
We say that a parametrized family of probability measures $\left\{ \mathcal{V}_{t,x} \right\}_{t \in (0,T),\ x \in \Omega}$ defined
on the space $\mathcal{F}$ is a \emph{dissipative measure--valued (DMV) solution} of problem (\ref{M1}) with the initial conditions
\[
\mathcal{V}_{0,x} \in \mathcal{P}(\mathcal{F}),
\]

$\mathcal{P}$ denoting the set of (Borel) probability measures,
if
\begin{itemize}
\item
\[
(t,x) \mapsto \mathcal{V}_{t,x} \ \mbox{is weakly-(*) measurable mapping from the physical space}\ (0,T) \times \Omega \ \mbox{into}\
\mathcal{P}(\mathcal{F});
\]
\item
\begin{equation} \label{M6}
\begin{split}
\left[ \intO{ \left< \mathcal{V}_{t,x}; \vr \right> \varphi } \right]_{t = 0}^{t = \tau} &=
\int_0^\tau \intO{ \left[ \left< \mathcal{V}_{t,x}; \vr \right> \partial_t \varphi + \left< \mathcal{V}_{t,x}; \vc{m} \right> \cdot \Grad \varphi \right] }
\dt
+ \int_0^\tau \int_{{\Omega}} \Grad \varphi \cdot {\rm d} \mu^1_C
\\
\mbox{for a.a.}\ \tau &\in (0,T),\  {{\varphi \in C^1([0,T] \times \Omega)}}
\end{split}
\end{equation}
\begin{equation*} 
\begin{split}
&\left[ \intO{ \left< \mathcal{V}_{t,x} ; \vc{m} \right> \cdot \bfphi } \right]_{t = 0}^{t = \tau} \\ &=
\int_0^\tau \intO{ \left[ \left< \mathcal{V}_{t,x} ;\vc{m} \right> \cdot \partial_t \bfphi + \left< \mathcal{V}_{t,x} ; \frac{ \vc{m} \otimes \vc{m} }{\vr}
\right> : \Grad \bfphi + \left< \mathcal{V}_{t,x}; p(\vr) \right> \Div \bfphi \right] } \dt\\
&+ \int_0^\tau \int_{{\Omega}} \Grad \varphi : {\rm d} \mu^2_C\\
&\mbox{for a.a.}\ \tau \in (0,T),\  {{ \bfphi \in C^1([0,T] \times \Omega; R^N),}}
\end{split}
\end{equation*}
where
\[
\mu^1_C \in \mathcal{M}([0,T] \times \Omega; R^N),\
\mu^2_C \in \mathcal{M}([0,T] \times \Omega; R^{N \times N})
\]
are signed vector--valued \emph{concentration measures} defined on the physical space {{$[0,T] \times \Omega$;
}}
\item the energy inequality
\begin{equation} \label{M8}
\left[ \intO{ \left< \mathcal{V}_{t,x} ; \left( \frac{1}{2} \frac{ |\vc{m}|^2 }{\vr} + P(\vr)  \right) \right> } \right]_{t = 0}^{t = \tau} \leq 0
\end{equation}
holds for a.a. $\tau \in (0,T)$;
we denote
\[
\mathcal{D} (\tau) \equiv - \left[ \intO{ \left< \mathcal{V}_{t,x} ; \left( \frac{1}{2} \frac{ |\vc{m}|^2 }{\vr} + P(\vr)  \right) \right>  } \right]_{t = 0}^{t = \tau}
\]
the \emph{dissipation defect} - a non-negative $L^\infty$ function;

\item the {dissipation defect}
dominates the concentration measures $\mu^1_C$, $\mu^2_C$:
\begin{equation} \label{M9}
\int_{{\Omega}} 1 \ {\rm d} |\mu^1_C| \ + \int_{{\Omega}} 1 \ {\rm d} |\mu^2_C| \aleq \mathcal{D} \ \mbox{a.a. in}\ (0,T).
\end{equation}
\end{itemize}

Here and hereafter the symbol $A \aleq B$ means $A \leq cB$ for a generic positive constant $c$.

\begin{Remark} \label{R2}

The precise meaning of (\ref{M9}) is
\[
\sup_{ \| \bfphi \|_{C({\Omega}; R^N)}  \leq 1 } \int_0^T \int_{{\Omega}} \psi \bfphi \cdot {\rm d} \mu^1_C
+ \sup_{ \| \bfphi \|_{C({\Omega}; R^{N \times N})}  \leq 1 } \int_0^T \int_{{\Omega}} \psi \bfphi : {\rm d} \mu^2_C
\aleq \int_0^T \mathcal{D} \psi \dt
\]
for any $\psi \in C[0,T]$, $\psi \geq 0$. Relation (\ref{M9}) can be replaced by a weaker stipulation
\[
\int_0^\tau \int_{{\Omega}} 1 \ {\rm d} |\mu^1_C| \ + \int_0^\tau \int_{{\Omega}} 1 \ {\rm d} |\mu^2_C| \aleq \int_0^\tau \mathcal{D} \ \dt \  \ \mbox{any}\ \tau \in (0,T).
\]

\end{Remark}

\begin{Remark} \label{R3}

We tacitly assume that all expressions in (\ref{M6}--\ref{M8}) are at least integrable on the physical space $(0,T) \times \Omega$.

\end{Remark}

\end{Definition}

The key result is the (DMV)--strong uniqueness principle shown in Gwiazda et al. \cite{GSWW}, and also in \cite{FGSWW1}:

\begin{Proposition} \label{P1}

Let the initial data $\{ \mathcal{V}_{0,x} \}_{x \in \Omega}$ be given as
\[
\mathcal{V}_{0,x} = \delta_{ \vr^0(x), \vc{m}^0(x) } \ \mbox{for a.a.}\ x \in \Omega;
\]
where
\[
\vr^0 \in C^1 ({\Omega}), \ \vc{m}^0 \in C^1 ({\Omega}; R^N), \ \vr^0(x) > 0 \ \mbox{for all}\ x \in {\Omega}.
\]
Suppose that the problem (\ref{M1}), (\ref{M2}) admits a strong solution $\vr \in C^1([0,T] \times {\Omega})$, $\vc{m} \in C^1([0,T] \times {\Omega}; R^N)$
defined in $[0,T]$,
with the initial data $\vr^0$, $\vc{m}^0$. Let $\{ \mathcal{V}_{t,x} \}_{t \in (0,T), x \in \Omega}$ be a (DMV) solution of the same problem
in the sense specified in Definition \ref{D1}, with the initial data $\mathcal{V}_{0,x}$.

Then
\[
\mathcal{V}_{t,x} = \delta_{\vr(t,x), \vc{m}(t,x)} \ \mbox{for a.a.}\ (t,x) \in (0,T) \times \Omega.
\]

\end{Proposition}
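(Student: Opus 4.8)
The plan is to use the \emph{relative energy} (weak--strong uniqueness) method, measuring the discrepancy between the (DMV) solution and the strong solution through a convex functional. Denote the strong solution by $(r, \vc{M})$ with $\vc{M} = r \vU$, and observe that the total energy
\[
E(\vr, \vc{m}) = \frac{1}{2}\frac{|\vc{m}|^2}{\vr} + P(\vr)
\]
is a convex function of the conservative variables $(\vr, \vc{m})$ on $\{ \vr > 0 \}$, strictly so away from vacuum, since its kinetic part is the perspective of $q \mapsto \tfrac12 |q|^2$ and $P''(\vr) = p'(\vr)/\vr > 0$. I would then introduce the relative energy
\[
\mathcal{E}_{\rm rel}(\tau) = \intO{ \left< \mathcal{V}_{\tau,x}; E(\vr, \vc{m}) - \frac{\partial E}{\partial \vr}(r, \vc{M})\,(\vr - r) - \frac{\partial E}{\partial \vc{m}}(r, \vc{M})\cdot (\vc{m} - \vc{M}) - E(r, \vc{M}) \right> },
\]
which, by convexity, is a nonnegative quantity vanishing exactly when $\mathcal{V}_{\tau,x} = \delta_{r(\tau,x), \vc{M}(\tau,x)}$.

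The core step is to derive a \emph{relative energy inequality}. First I would use the energy inequality (\ref{M8}) to control the term $\left< \mathcal{V}_{t,x}; E(\vr,\vc{m}) \right>$, which generates precisely the nonnegative dissipation defect $\mathcal{D}$. The linear correction terms in $\mathcal{E}_{\rm rel}$ are handled by inserting the functions $\partial_\vr E(r,\vc{M}) = -\tfrac12 |\vU|^2 + P'(r)$ and $\partial_{\vc{m}} E(r, \vc{M}) = \vU$ as admissible test functions $\varphi$, $\bfphi$ in the weak formulation (\ref{M6}); this is legitimate precisely because $r, \vU \in C^1$ and $r$ is bounded away from zero on the compact set $[0,T] \times \Omega$. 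Combining these identities with the equations satisfied by $(r,\vc{M})$ itself and integrating by parts, the leading-order terms reorganize into $\mathcal{E}_{\rm rel}$ plus remainders that are quadratic in the fluctuation and carry a factor $\Grad \vU$ or $\Grad r$. The concentration measures $\mu^1_C$, $\mu^2_C$ enter paired with $\Grad$ of the $C^1$ coefficients built from $(r,\vc{M})$, so by the domination property (\ref{M9}) their contribution over $(0,\tau)$ is bounded by $c \int_0^\tau \mathcal{D} \dt$.

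The outcome should be an estimate of Gronwall type,
\[
\mathcal{E}_{\rm rel}(\tau) + \mathcal{D}(\tau) \leq \mathcal{E}_{\rm rel}(0) + c \int_0^\tau \left( \mathcal{E}_{\rm rel}(t) + \mathcal{D}(t) \right) \dt,
\]
with $c$ depending only on $\| r \|_{C^1}$, $\| \vU \|_{C^1}$ and the lower bound on $r$. Since the initial Young measure is $\delta_{\vr^0, \vc{m}^0}$ and the strong solution starts from the same data, $\mathcal{E}_{\rm rel}(0) = 0$; Gronwall's lemma then forces $\mathcal{E}_{\rm rel}(\tau) = 0$ and $\mathcal{D}(\tau) = 0$ for a.a. $\tau$. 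Nonnegativity together with strict convexity of $E$ away from vacuum finally yields $\mathcal{V}_{\tau,x} = \delta_{r(\tau,x), \vc{M}(\tau,x)}$ for a.a. $(\tau, x)$, which is the assertion.

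The main obstacle lies in the relative energy inequality rather than in the Gronwall step: one must show that all remainder terms --- in particular the \emph{pressure} contributions $\left< \mathcal{V}_{t,x}; P(\vr) - P'(r)(\vr - r) - P(r) \right>$ and the convective terms arising from $\left< \mathcal{V}_{t,x}; \vc{m}\otimes\vc{m}/\vr \right>$ --- are genuinely dominated by $\mathcal{E}_{\rm rel}$ plus a multiple of $\mathcal{D}$. This requires the algebraic cancellations specific to the Euler flux together with the uniform convexity estimate, and a careful accounting of the concentration measures through (\ref{M9}) so that no uncontrolled concentration survives. Once these bounds are secured, the remaining argument is routine.
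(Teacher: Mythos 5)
Your proposal is correct in outline and coincides with the method actually used for this result: the paper does not prove Proposition~\ref{P1} itself but cites \cite{GSWW} and \cite{FGSWW1}, both of which establish (DMV)--strong uniqueness by exactly the relative energy (Bregman distance of the convex total energy) argument you describe, testing the weak formulation with $\partial_{\vr}E(r,\vc{M})$ and $\vU$, absorbing the concentration measures via the domination property (\ref{M9}), and closing with Gronwall. The technical points you flag as the ``main obstacle'' (coercivity of the relative energy away from vacuum and control of the pressure and convective remainders) are precisely the content of those references, so your plan is a faithful sketch of the standard proof.
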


\begin{Remark}

Strictly speaking Proposition~\ref{P1} was originally proved on a smooth bounded domain with the impermeability condition
\[
\vu \cdot \vc{n}|_{\partial \Omega} = 0.
\]
However, it can be easily checked that the proof applies to the periodic boundary conditions with only obvious modifications.

\end{Remark}

\subsection{Measure--valued solutions for the complete Euler system}
\label{F}

Similarly to the preceeding section, we may introduce (DMV) solutions for the
\emph{complete Euler system}
\begin{equation} \label{F1}
\begin{split}
\partial_t \vr + \Div (\vr \vu) &= 0, \\
\partial_t (\vr \vu) + \Div (\vr \vu \otimes \vu) + \Grad p(\vr, \vt) &= 0,\\
\partial_t \left( \frac{1}{2} \vr |\vu|^2 + \vr e(\vr, \vt) \right)
+ \Div \left[ \left( \frac{1}{2} \vr |\vu|^2 + \vr e(\vr, \vt) \right) \vu \right]
+ \Div (p(\vr, \vt) \vu) &= 0
\end{split}
\end{equation}
supplemented with  the \emph{periodic boundary conditions}, meaning $\Omega$ can be identified with the flat torus
\begin{equation} \label{F2}
\Omega =\big( [0,1]|_{\{0,1\}}\big)^N.
\end{equation}

Here, the new variable is the absolute temperature $\vt$, $e = e(\vr, \vt)$ is the specific internal energy, and
the third equation in (\ref{F1}) expresses the conservation of the total energy. In addition, we suppose that
$p$ and $e$ are interrelated to the \emph{specific entropy}  $s = s(\vr, \vt)$ via Gibbs' equation
\begin{equation} \label{GIB}
\vt Ds = De + P D\left( \frac{1}{\vr} \right).
\end{equation}
Accordingly, if all quantities in (\ref{F1}) are smooth, the entropy satisfies a transport equation
\[
\partial_t (\vr s) + \Div (\vr s \vu) = 0.
\]
In the context of \emph{weak} solutions, the entropy balance is replaced by an inequality
\begin{equation*} 
\partial_t (\vr s) + \Div (\vr s \vu) \geq 0
\end{equation*}
that may be seen as a mathematical formulation of the Second law of thermodynamics.

Similarly to the preceding section,
the concept of (DMV) solution uses the conservative variables:
the density $\vr$, the momentum $\vc{m} = \vr \vu$, and the total energy $E = \frac{1}{2} \vr |\vu|^2 + \vr e(\vr, \vt)$. In addition, we suppose
a relation between the pressure and the internal energy,
\begin{equation} \label{F4}
p = (\gamma - 1) \vr e , \ \mbox{with}\ \gamma > 1.
\end{equation}
Under these circumstances, we have
\[
s= S \left( \frac{(\gamma - 1) e}{\vr^{\gamma - 1}} \right) =
S \left( \frac{p}{\vr^{\gamma}} \right)
\]
for a certain function $S$. Accordingly, the system (\ref{F1}) rewrites as
\begin{equation} \label{F5}
\begin{split}
\partial_t \vr + \Div \vc{m} &= 0,\\
\partial_t \vc{m} + \Div \left( \frac{\vc{m} \otimes \vc{m}}{\vr} \right) + (\gamma - 1) \Grad \left( E - \frac{1}{2} \frac{|\vc{m}|^2}{\vr} \right)  &= 0,\\
\partial_t E +
\Div \left[ \left( E + (\gamma - 1) \left( E - \frac{1}{2} \frac{|\vc{m}|^2}{\vr} \right) \right) \frac{\vc{m}}{\vr} \right] &= 0,
\end{split}
\end{equation}
together with the associated entropy inequality
\begin{equation} \label{F6}
\partial_t \left( \vr S \left( (\gamma - 1) \frac{E - \frac{1}{2} \frac{|\vc{m}|^2}{\vr} }{\vr^\gamma} \right) \right)
+ \Div \left[ S \left( (\gamma - 1) \frac{E - \frac{1}{2} \frac{|\vc{m}|^2}{\vr} }{\vr^\gamma} \right) \vc{m} \right] \equiv \sigma \geq 0.
\end{equation}
In addition, we may use, formally, the equation of continuity, to replace (\ref{F6}) by a more restrictive stipulation
\begin{equation} \label{F7}
\partial_t \left( \vr \mathcal{S}_\chi \left( (\gamma - 1) \frac{E - \frac{1}{2} \frac{|\vc{m}|^2}{\vr} }{\vr^\gamma} \right) \right)
+ \Div \left[ \mathcal{S}_\chi \left( (\gamma - 1) \frac{E - \frac{1}{2} \frac{|\vc{m}|^2}{\vr} }{\vr^\gamma} \right) \vc{m} \right] \equiv \sigma \geq 0,
\end{equation}
where
\begin{align}\label{S_chi}
\mathcal{S}_\chi = \chi \circ S,\ \chi: R \to R \ \mbox{an increasing concave function}\ \chi \leq \Ov{\chi}.
\end{align}
Inequality (\ref{F7}) may be seen as a \emph{renormalized} variant of (\ref{F6}).
For the sake of simplicity, we focus on the constitutive equations of a perfect gas,
specifically
\begin{align}\label{pressure_cE}
p(\vr,\vt)=\vr\vt, \ e(\vr,\vt)=c_v\vt, \ s(\vr,\vt)=\log\left(\frac{\vt^{c_v}}{\vr}\right),
\end{align}
where $c_v = \frac{1}{\gamma - 1}$ is the (constant) specific heat at constant volume. Consequently,
\begin{align}\label{S_cE}
S\left(Z\right)=\frac{1}{\gamma-1}\log\left(Z\right),\ \mbox{and entropies}\ \eta=\vr\chi\left(\frac{1}{\gamma-1}\log\left(\frac{p}{\vr^{\gamma}}\right)\right)
\end{align}
for $\chi$ as in \eqref{S_chi}.
We are ready to state the definition of a (DMV) solution for the complete Euler system (\ref{F5}), (\ref{F2}), cf. \cite{BF}.

\begin{Definition} \label{D2}
Let
\[
\mathcal{F}= \left\{ [\vr, \vc{m}, E ] \ \Big| \ \vr \geq 0, \ \vc{m} \in R^N, E \geq 0 \right\}.
\]
We say that a parameterized family of probability measures $\left\{ \mathcal{V}_{t,x} \right\}_{t \in (0,T),\ x \in \Omega}$ defined
on the space $\mathcal{F}$ is a \emph{dissipative measure--valued (DMV) solution} of problem (\ref{F5}), (\ref{F2}) with the initial conditions
\[
\mathcal{V}_{0,x} \in \mathcal{P}(\mathcal{F})
\]
if
\begin{itemize}
\item
\[
(t,x) \mapsto \mathcal{V}_{t,x} \ \mbox{is weakly-(*) measurable mapping from the physical space}\ (0,T) \times \Omega \ \mbox{into}\
\mathcal{P}(\mathcal{F});
\]
\item
\begin{equation*}
\int_0^T \intO{ \left[ \left< \mathcal{V}_{t,x}; \vr \right> \partial_t \varphi + \left< \mathcal{V}_{t,x}; \vc{m} \right> \cdot
\Grad \varphi \right] } \dt = - \intO{ \left< \mathcal{V}_{0,x} ; \vr \right> \varphi(0, \cdot)}
\end{equation*}
for any $\varphi \in C^1_c([0,T) \times \Omega)$;
\item
\begin{equation*} 
\begin{split}
\int_0^T & \intO{ \left[ \left< \mathcal{V}_{t,x}; \vc{m} \right> \cdot \partial_t \bfphi  + \left< \mathcal{V}_{t,x}; \frac{ \vc{m} \otimes \vc{m} }{\vr}
\right> : \Grad \bfphi + (\gamma - 1) \left< \mathcal{V}_{t,x}; E - \frac{1}{2} \frac{|\vc{m}|^2 }{\vr} \right> \Div \bfphi \right] }\dt \\
&= - \intO{ \left< \mathcal{V}_{0,x}; \vc{m} \right> \cdot \bfphi(0, \cdot) } +
\int_0^T \int_\Omega \Grad \bfphi : {\rm d} \mu_C
\end{split}
\end{equation*}
for any $\bfphi \in C^1_c([0, T) \times \Omega; R^N)$,
 where $\mu_C$ is a (vectorial) signed measure on $[0,T]\times \Omega$;
\item
\begin{equation*} 
\intO{ \left< \mathcal{V}_{\tau,x}; E \right> } \leq \intO{ \left< \mathcal{V}_{0,x}; E \right> } \ \mbox{for a.a.}\ \tau \in (0,T);
\end{equation*}
\item
\begin{equation*} 
\begin{split}
\int_0^T &\intO{ \left[ \left< \mathcal{V}_{t,x} ; \vr\mathcal{S}_\chi (\vr, \vc{m}, E) \right> \partial_t \varphi +
\left< \mathcal{V}_{t,x}; \mathcal{S}_\chi (\vr, \vc{m}, E) \vc{m} \right> \cdot \Grad \varphi  \right] } \dt \\
&\leq - \intO{ \left< \mathcal{V}_{0,x}; \vr\mathcal{S}_\chi(\vr, \vc{m}, E) \right> \varphi(0, \cdot) }
\end{split}
\end{equation*}
for any $\varphi \in C^1_c([0,T) \times \Omega)$, $\varphi \geq 0$, and any
$\chi$ defined on $R,$ increasing, concave, $\chi(Z) \leq \Ov{\chi}$  for all $ Z$;

\item
\begin{equation*} \label{F12}
\int_0^\tau \int_\Omega d \left| \mu_C \right|  \leq c(N, \gamma) \int_0^\tau \intO{ \left[ \left< \mathcal{V}_{0,x}; E \right> - \left< \mathcal{V}_{t,x}; E \right> \right]}
\dt \ \mbox{for any}\ 0 \leq \tau < T.
\end{equation*}
\end{itemize}
\end{Definition}

Finally, we formulate an analogue of the weak--strong uniqueness result stated in Proposition~\ref{P1}. To this end, we recall the hypothesis of thermodynamic stability:
\begin{equation} \label{F13}
\frac{\partial p(\vr, \vt) }{\partial \vr} > 0, \ \frac{\partial e(\vr, \vt) }{\partial \vt} > 0 \ \mbox{for all}\ \vr, \vt > 0,
\end{equation}
or, in terms of the conservative variables,
\[
\begin{split}
(\vr, \vc{m}, E) \mapsto \vr S \left( (\gamma - 1) \frac{E - \frac{1}{2} \frac{|\vc{m}|^2}{\vr} }{\vr^\gamma} \right)\ \mbox{is a concave upper semi--continuous function on}\ \mathcal{F},
\end{split}
\]
see \cite{BreFei17B} for details.
\begin{Remark}\label{R4}
It follows from \cite{BreFei17B} that the entropy $\eta=\vr\mathcal{S}_{\chi}$ with $\mathcal{S}_{\chi}$ as in \eqref{S_chi} is concave for any function $S$ satisfying
\begin{align*}
(\gamma-1)S'(Z)+\gamma S''(Z)Z <0 \ \mbox{for all} \ Z >0.
\end{align*}
In particular for $S$ in \eqref{S_cE}.
\end{Remark}

We are ready to state the weak--strong uniqueness result, see \cite[Theorem 3.3]{BF}.

\begin{Proposition} \label{P2}\ \\
Let the thermodynamic functions $p$, $e$, and $s$ satisfy the hypotheses (\ref{GIB}), (\ref{F4}), (\ref{F13}). Suppose that the Euler system (\ref{F1}),
(\ref{F2}) admits a continuously differentiable
solution $(\tvr, \tvt, \tvu)$ in $[0,T] \times \Omega$ emanating from the initial data
\[
\tvr^0 > 0,\ \tvt^0 > 0 \ \mbox{in}\ {\Omega}.
\]

Assume that  $\{ \mathcal{V}_{t,x} \}_{(t,x) \in (0,T) \times \Omega}$ is a (DMV) solution of the  system (\ref{F5}), (\ref{F2}) in the sense specified in Definition \ref{D2}, such that
\[
{\mathcal{V}}_{0,x} = \delta_{\tvr^0(x), \tvr^0 \tvu^0 (x), \frac{1}{2} \tvr^0(x) |\tvu^0(x)|^2 + \tvr^0 e(\tvr^0, \tvt^0)(x)} \ \mbox{for a.a.}\ x \in \Omega.
\]

Then
\[
\mathcal{V}_{t,x} = \delta_{\tvr(t,x), \tvr \tvu (t,x), \frac{1}{2} \tvr(x) |\tvu(x)|^2 + \tvr e(\tvr, \tvt)(t,x)} \ \mbox{for a.a.}\ (t,x) \in (0,T) \times \Omega.
\]

\end{Proposition}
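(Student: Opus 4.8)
\bProof
The argument is a realization of the relative energy (relative entropy) method in the measure--valued setting, leading to a Gronwall inequality that forces the Young measure to collapse onto the strong solution.

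The plan is to quantify the distance between the (DMV) solution and the strong solution by means of the \emph{ballistic free energy}. Since the strong solution is continuously differentiable and $\tvr^0, \tvt^0 > 0$, a compactness argument shows that $\tvr$ and $\tvt$ are bounded above and below away from zero on the compact set $[0,T] \times \Omega$; in particular $\tvt$ is a bounded admissible multiplier. Introducing
\[
H_{\tvt}(\vr, \vt) = \vr e(\vr, \vt) - \tvt\, \vr s(\vr, \vt),
\]
I would define the relative energy functional
\[
\mE\left( \vr, \vu, \vt \ \Big| \ \tvr, \tvu, \tvt \right)
= \frac{1}{2} \vr |\vu - \tvu|^2 + H_{\tvt}(\vr, \vt)
- \frac{\partial H_{\tvt}}{\partial \vr}(\tvr, \tvt)\,(\vr - \tvr) - H_{\tvt}(\tvr, \tvt),
\]
to be read as a function of the conservative variables $[\vr, \vc{m}, E]$ through $\vu = \vc{m}/\vr$ and $\vr e = E - \frac{1}{2}|\vc{m}|^2/\vr$, and set
\[
\mathcal{R}(\tau) = \intO{ \left< \mathcal{V}_{\tau,x}; \mE\left( \vr, \vu, \vt \ \Big| \ \tvr, \tvu, \tvt \right) \right> }.
\]
The hypothesis of thermodynamic stability \eqref{F13}, equivalently the concavity of $(\vr, \vc{m}, E) \mapsto \vr S$ recalled after \eqref{F13}, guarantees that the integrand is nonnegative and vanishes precisely when $[\vr, \vc{m}, E] = [\tvr, \tvr\tvu, \frac{1}{2}\tvr|\tvu|^2 + \tvr e(\tvr, \tvt)]$. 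Hence $\mathcal{R} \geq 0$, and $\mathcal{R}(\tau) = 0$ already forces the desired Dirac structure; moreover $\mathcal{R}(0) = 0$ because $\mathcal{V}_{0,x}$ is the Dirac mass at the strong initial data.

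Next I would derive the \emph{relative energy inequality}. The ingredients are the energy inequality and the renormalized entropy inequality of Definition~\ref{D2}, combined with the momentum and continuity relations tested against functions manufactured from the strong solution (essentially $\bfphi = \tvu$ in the momentum balance and a scalar test function built from $\tvt$ and $\frac{\partial H_{\tvt}}{\partial \vr}(\tvr, \tvt)$ in the continuity balance, with temporal cut--offs approximating the indicator of $[0,\tau]$). Adding the energy inequality to the entropy inequality weighted by $-\tvt$, subtracting the two tested balance laws, and finally invoking the fact that $(\tvr, \tvu, \tvt)$ solves the Euler system \eqref{F1} \emph{pointwise}, the leading-order contributions cancel. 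What remains organizes into a quadratic remainder dominated by $\mE$ itself, with coefficients controlled by $\|(\tvr, \tvu, \tvt)\|_{C^1}$, together with the contribution of the concentration measure $\mu_C$. The latter is absorbed using the domination hypothesis (last item of Definition~\ref{D2}), which bounds $\int_0^\tau \int_\Omega d|\mu_C|$ by the dissipation of the total energy, itself controlled by $\mathcal{R}$. The outcome is an estimate of the form
\[
\mathcal{R}(\tau) \aleq \mathcal{R}(0) + \int_0^\tau \mathcal{R}(t) \ \dt \quad \mbox{for a.a. } \tau \in (0,T).
\]

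Since $\mathcal{R}(0) = 0$, Gronwall's lemma yields $\mathcal{R}(\tau) = 0$ for a.a. $\tau$, and the coercivity of $\mE$ gives the claimed collapse $\mathcal{V}_{t,x} = \delta_{\tvr, \tvr\tvu, \tilde E}$. The main obstacle I anticipate is the entropy step: the inequality in Definition~\ref{D2} is only available in the renormalized form with a bounded increasing concave $\chi$, whereas the ballistic free energy needs the genuine entropy $s = S$. One therefore approximates the identity by an increasing sequence $\chi_n \nearrow \mathrm{id}$ of admissible functions and passes to the limit, while simultaneously keeping the associated entropy--flux concentrations controlled by the dissipation defect; this limit, together with the bookkeeping required to absorb $\mu_C$ into the dissipative term, is the delicate part. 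A secondary point is justifying all manipulations under the mere integrability assumption of Remark~\ref{R3}, which calls for truncation arguments already implicit in the definition of $\mE$. The complete details are carried out in \cite[Theorem~3.3]{BF}. \hfill $\Box$
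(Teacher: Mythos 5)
The paper does not prove Proposition \ref{P2} at all: it simply cites \cite[Theorem 3.3]{BF}, which establishes the result by exactly the relative-energy (ballistic free energy) argument you outline, and your own sketch likewise defers the details to that reference. Your description of the method --- coercivity of $\mE$ from the thermodynamic stability hypothesis, the Gronwall inequality, the passage from the renormalized entropies $\mathcal{S}_\chi$ to the genuine entropy, and the absorption of the concentration measure by the dissipation defect --- is consistent with \cite{BF}, so this matches the paper's treatment.
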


\section{Entropy stable finite volume schemes for conservation laws}\label{S:scheme}

We start with recalling the concept of entropy stable finite volume schemes for a general multidimensional system of hyperbolic conservation laws
\begin{equation}\label{conservative_system}
\begin{aligned}
\pd{\vU}{t}+\di\bff(\vU)&=0,  & & \mbox{ in } \Omega \times (0,T)\\
\vU(0,\cdot)&=\vU^0, && \mbox{ in } \Omega.
\end{aligned}
\end{equation}
Here $\vU,$ $\bff(\vU)$ denote the vectors of conservative variables and the flux function, respectively. The system (\ref{conservative_system}) is usually accompanied with suitable boundary conditions. As agreed above, we will exclusively use the
periodic boundary conditions.  Throughout the paper we will
confine ourselves to semi-discrete schemes.\, Specifically, the time will remain continuous, the discretization applied to
the space variable only. The question of time discretization is more subtle. As is well-known the \emph{implicit} time discretization gives rise to the entropy production and thus the correct sign in the entropy inequality. Consequently, the resulting fully implicit scheme will be entropy stable once its semi-discrete variant was entropy stable. On the other hand, the \emph{explicit} time discretization which is a natural choice for hyperbolic conservation laws may actually reduce the (physical) entropy, and the interplay between the spatial entropy production and temporal entropy dissipation has to be taken into account in practical applications, see, e.g., \cite{Tad03, LMR02, B03}.

\subsection{Spatial discretization}

The relevant domain for the space discretization
is $\Omega\equiv\Omega_h\subset R^N,$ $N=1,2,3,$ where $\Omega_h\defeq [0,\ell]^N,$ $\ell>0,$ being divided into finite volume cells $K$, i.e.,
\begin{align*}
\Ov{\Omega}_h \defeq \bigcup_{K\in\grid} \Ov{K}.
\end{align*}
Mesh $\grid$ is a regular quadrilateral grid.
For instance, in two space dimensions,  cell $\elem,$ its center $S_K,$ and the uniform mesh size $h$ are given by
\begin{align*}
\elem\defeq \left[\dvxp{x},\dvxm{x}\right)\times \left[\dvyp{y},\dvym{y}\right), \quad S_K\defeq (x_i,y_j)= \left(\frac{\dvxm{x}+\dvxp{x}}{h},\frac{\dvym{y}+\dvyp{y}}{h} \right),
\end{align*}
and $\displaystyle h\defeq \dvxp{x}-\dvxm{x}=\dvyp{y}-\dvym{y}, $
respectively.
\begin{Remark}\label{R44}
 Note that the usual relabeling $(x_1,x_2)\mapsto (x,y)$ has been taken into account in the above example. It is also possible to consider the rectangular cells with $h_x=ch_y,$ where $c$ is a positive constant and $h_x,$ $h_y$ are fixed mesh sizes in $x$- and $y$-direction, respectively. An analogous generalization of the three mesh sizes $h_x,$ $h_y,$ $h_z$ is applicable   for $N=3$ as well. For the sake of simplicity we keep the mesh size fixed in all space directions.
\end{Remark}
Let $ X(\grid)$ denote the space of piecewise constant functions defined on mesh $\grid.$ For $g_h \in X(\grid)$ we set
$\displaystyle g_K \equiv g_{h_{|_{\elem}}}.$  Then it holds that
\begin{align*}
\intO{g_h}=h^N\sum_{K\in\grid} g_K .
\end{align*}
Further, we define the projection
\begin{align*}
\Pi_h : L^1(\Omega) \rightarrow X(\grid), \quad (\Pi_h(\phi))_{K}\defeq\frac{1}{h^N}\int_{\elem}{\phi(x)\,dx}.
\end{align*}
Boundary $\partial K$ of a cell $\elem$  is created by faces $\sigma.$ The face between two neighbouring cells $\elem$ and $\elemL$ shall be denoted by $\sigma=K|L.$
  By $\edge$ we denote the set of all faces $\sigma$ of all cells $\elem \in \grid.$
 The value of  $G_h $ on the face $\sigma$ shall be denoted by $\bv{G},$ and analogously for faces $\bndr$ of cell $K$ in $\pm\bfe_s$ direction. Note that $\bfe_s$ is the unit basis vector in the $s$-th space direction, $s =1, \dots, N.$ For $g_h, G_h \in X(\grid)$ we define the following discrete operators
\begin{align*}
\der{g_h}&\defeq \frac{g_L-g_J}{2h}, \ \derp{g_h}\defeq \frac{g_L-g_K}{h},\ \derm{g_h}\defeq \frac{g_K-g_J}{h},
\quad L=K+h\bfe_s, J=K-h\bfe_s, \\
\fluxder{G_h}&\defeq \frac{G_{\sigma,s+}-G_{\sigma,s-}}{h}, \quad s =1, \dots, N.
\end{align*}
Let $\mathcal{N}(K)$  denote the set of all neighbouring cells of the cell $\elem$. The discrete  Laplace and divergence operators
are defined as follows
\begin{align*}
 \laph{g_h}&\defeq \frac{1}{h^2}\sum_{L\in\mathcal{N}(K)}(g_L-g_K)=\sum_{s=1}^N\laphs{g_h}, \\  \diht{\bfg_h}&\defeq \sum_{s=1}^N \der{g_h^s}, \quad
\dih{\bfG_h} \defeq \sum_{s=1}^N \fluxder{G_h^s}.
\end{align*}
Furthermore, on the face $\sigma=K|L \in \edge$ we define the  jump and mean value operators
\begin{align*}
\jump{g_h}:=g_L \bold{n}_K^+ + g_K\bold{n}_K^-, \quad \avrg{g_h} \defeq  \frac{g_K+g_L}{2}, \quad L=K+h\bfe_s,\ s=1,\ldots,N,
\end{align*}
respectively. Here $\mathbf{n}_K^+,$ $\mathbf{n}_K^- \equiv \mathbf{n}_L^+$ denote the unit  outer normal to $\elem$ and $\elemL,$ respectively. Note that in our case the mesh is a regular  quadrilateral grid, and thus $\mathbf{n}_K^{\pm} || \mathbf{e}_s $ for some $s=1,\ldots,N.$ Finally, we introduce the mean value of $g_h \in X(\grid)$ in cell $\elem$ in the direction of $\bfe_s$ by
\begin{align*}
\avrgK{g_h}:=\frac{g_L+g_J}{2}, \quad L=K+h\bfe_s, \ J=K-h\bfe_s.
\end{align*}

\subsection{Entropy stable numerical scheme}

By $\vU_h(t) \in X(\grid)^{M},$ $M> 1,$ we denote
the  solution of a semi-discrete finite volume scheme
\begin{equation}\label{scheme}
\begin{aligned}
\Dt{\vU_K(t)}+\dih{\bfF_h(t)}=0,  \quad t>0, \quad K \in \grid,\\
\vU_K(0)=\dv{(\Pi_h(\vU^0))}, \quad K \in \grid.
\end{aligned}
\end{equation}
Recall that $\vU_h(t)_{|_{\elem}}=\vU_K(t)$
is the value of  finite volume approximation $\vU_h(t)$ in cell $\elem.$
The numerical flux function $\bfF_h$  quantifies the flux
across the interfaces $\sigma\in\edge.$  For $\sigma=K|L$ we have
$\bv{\bfF}\equiv\bfF_h(\vU_K,\vU_L).$  In what follows we formulate assumptions on admissible numerical fluxes.

Firstly, the numerical flux $\bfF_h$ is assumed to be  \emph{consistent} with the physical flux $\bff$ in the sense that $\bfF_h(\vw,\vw)=\bff(\vw)$ for all $\vw\in R^M.$ Moreover, it is assumed to be  \emph{locally Lipschitz continuous}, i.e., for  every compact set $D\subset R^M$ there exists a $C>0$ such that
 \begin{align*} 
 \left\|\bfF_{\sigma}(t)-\bff(\vU_K(t))\right\|\equiv\left\|\bfF_h(\vU_K(t),\vU_L(t))-\bff(\vU_K(t))\right\|\leq C\|\vU_K(t)-\vU_L(t)\|, \ \sigma=K|L,
 \end{align*}
 whenever $\vU_K(t),$ $\vU_L(t) \in D$ for  $t \in [0,T].$ Note that all numerical fluxes discussed below are consistent and locally Lisphitz continuous.\\

The discrete entropy inequality plays a crucial role in obtaining stability results for $\vU_h(t).$
Let $(\eta,\bfq)$ be an \emph{entropy pair} associated with system \eqref{conservative_system}, i.e., $(\eta,\bfq): R^M \rightarrow R\times  R^N$ such that  $\eta$ is concave and $\bfq$ satisfies for all $\vw\in R^M$   the compatibility condition
\begin{align*}
\nabla_{\vw}q^s(\vw)^T=\nabla_{\vw}\eta(\vw)^T\nabla_{\vw} f^s(\vw),\quad  s=1,\ldots,N.
\end{align*}
Scheme \eqref{scheme} is then said to be \textit{entropy stable} if it satisfies the discrete entropy inequality
\begin{align}\label{dis_en_ineq}
\Dt{{\eta(\vU_K}(t))}+\dih{\bfQ_h(t)} \geq 0, \quad K\in \grid, \ t>0.
\end{align}
If, in particular, equality holds in \eqref{dis_en_ineq}, we say the scheme \eqref{scheme} is \textit{entropy conservative}.
Here $\bfQ_h$ denotes the numerical entropy flux function that is  a function of two neighbouring values, i.e., $\bv{\bfQ}\equiv \bfQ_h(\vU_K,\vU_L)$ for $\sigma=K|L.$
 It is assumed to be \emph{consistent} with the differential entropy flux $\bfq,$ i.e., $\bfQ_h(\vw,\vw)=\bfq(\vw)$ for all $\vw\in R^M.$ Following the work of Tadmor et al. \cite{Tad03,FMT16}, entropy flux $\bfQ_h$ can be explicitly written in terms of the vector of entropy variables $\vV$, the numerical flux $\bfF_h$ and the potential function $\psi=\psi(\vU(\vV)),$ as
\begin{align}\label{def_Q}
\bv{\bfQ}\defeq \avrg{\vV_h}\bv{\bfF}-\avrg{\psi(\vV_h)}.
\end{align}
We shall omit the dependence on time  whenever there is no confusion.
Further, we say that  solution $\vU_h(t)$ of  scheme \eqref{scheme} satisfies the {\it weak BV (bounded variation)} condition if
\begin{align}\label{weakBV1}
\intOT{\sum_{\sigma\in\edge} \bv{\lambda} \big|\jump{\vU_h(t)}\big| h^N} \to 0 \quad \mbox { as } \quad h \to 0^+,
\end{align}
where $\bv{\lambda}$ is the coefficient of numerical viscosity that will be introduced in (\ref{num_flux}).


\begin{Remark}\label{R5}
In the literature (mathematical) convex entropy, $-\eta$, is often used,  see, e.g., \cite{Tad03,FMT16}. Here we  prefer to work with (physical) entropy that is a concave function on its effective domain, cf. Remark~\ref{R4}.
\end{Remark}

\begin{Remark}
For the complete Euler system \eqref{F5} the vector of entropy variables is given in terms of conservative variables $\vU$ by
\begin{align*}
\vV\defeq\nabla_{\vU}\eta(\vU)=\frac{\chi'(S(\vU))}{p}\left( \begin{array}{c}
E+\frac{p}{\gamma-1}\big((\gamma-1)\frac{\chi(S(\vU))}{\chi'(S(\vU))}-\gamma-1\big) \\
-\vm \\
\vr \\
\end{array}\right). \
\end{align*}
Substituting for pressure $\displaystyle p=(\gamma-1)\left(E-\frac{1}{2}\frac{|\vm|^2}{\vr}\right)$ we obtain
\begin{align*}
\vV=\frac{\chi'(S(\vU))}{(\gamma-1)\left(E-\frac{1}{2}\frac{|\vm|^2}{\vr}\right)}\left( \begin{array}{c}
E\left((\gamma-1)\frac{\chi(S(\vU))}{\chi'(S(\vU))}-\gamma\right)-\frac{1}{2}\frac{|\vm|^2}{\vr}\big((\gamma-1)\frac{\chi(S(\vU))}{\chi'(S(\vU))}-\gamma-1\big) \\
-\vm \\
\vr \\
\end{array}\right).
\end{align*}
The potential function for the complete Euler system reads
$
\psi(\vU(\vV))=-\chi'(S(\vU))\vm.
$
For the barotropic Euler system the corresponding entropy variables and entropy potential are given by
\begin{align*}
\vV=\left(\begin{array}{c}
\frac{a\gamma}{\gamma-1}\vr^{\gamma-1}-\frac{|\vm|^2}{2\vr^2} \\
\frac{\vm}{\vr}
\end{array}\right), \qquad \psi(\vU(\vV))= a \gamma \vr^{\gamma-1}\vm.
\end{align*}

The specific form of $\vc{V}$, as well as the flux function used in the discretization of the complete Euler system discussed below, immediately reveals a
peculiar difficulty connected with the development of the vacuum state $\vr = 0$ in finite time. Indeed the fluxes are not correctly defined as soon as $\vr = 0$, while the corresponding Lipschitz constant may blow up for $\vr \to 0$. We discuss this problem in Section \ref{S:positive} below.

\end{Remark}

\subsubsection{Examples of entropy stable numerical schemes}

\begin{itemize}
\item Rusanov / Lax-Friedrichs schemes\\
Following \cite{Tad03} the Rusanov scheme with the following numerical flux is entropy stable.
\begin{equation*}
\bv{\bfF}\defeq \avrg{\bff(\vU_h)}- d_\sigma \jump{\vU_h},
\end{equation*}
where $\displaystyle d_\sigma = \dfrac{1}{2}\max_{s=1,\dots,N}( |\lambda^s(\vU_K)|, | \lambda^s(\vU_L)|  )$, $\sigma = K|L$ and  $\lambda^s$ is the $s-$th eigenvalue of the corresponding Jacobian matrix $\bff'(\vU_h)$. In the case that $ \displaystyle d_\sigma = \dfrac{1}{2} \max_{s=1,\dots,N}\max_{K \in \grid}|\lambda^s(\vU_K)|$ we obtain the Lax-Friedrichs scheme that is entropy stable, too.
\item  entropy stable Roe scheme \\
The following entropy stable version of the Roe scheme has been proposed in \cite{Tad03}
\begin{equation*}
\bv{\bfF}\defeq \avrg{\bff(\vU_h)}- D_\sigma \jump{\vU_h}.
\end{equation*}
Denoting $\Ov A_{\sigma}$ the Roe matrix, that satisfies
$\jump{\bfF} \equiv \Ov A_{\sigma} \jump{\vU_h} $, we define
the viscosity matrix $D_\sigma = d(\Ov A_{\sigma})$ with the function
$\displaystyle d(\Ov\lambda^s) = \max (|\Ov \lambda^s|, k C_\sigma \jump{\vU_h})$. Here $k >0$ is the upper bound
of $\frac{d^2\eta(\vU)}{d \vU^2}$ and $C_\sigma$ is chosen such that $\min_\lambda (\lambda (Q_\sigma)) \geq C_\sigma |\jump{\vV_h}|$,
$Q_\sigma$ is the viscosity matrix with respect  to the entropy variables $\vV_h$, see \cite{Tad03}, Theorem~5.3, Example~5.8.
\item Lax-Wendroff scheme \\
In \cite{FKMT17} the entropy stable Lax-Wendroff scheme has been presented. The numerical flux reads
\begin{equation*}
\bv{\bfF}:= \bv{\tilde{\bfF}^r} - d_\sigma | \jump{\vV_h}|^{r-1} \jump{\vV_h},
\end{equation*}
where $\bv{\tilde{\bfF}^r}$ is a $r-$th order entropy conservative numerical flux, see \cite{Tad03}, $d_\sigma$ is some positive number.
In \cite{fjdiss} it has been shown that this scheme is formally $r-$th order accurate, entropy stable and under the assumptions that
$\frac{d^2\eta(\vU)}{d \vU^2} \geq \Un{\eta} > 0$ (for convex mathematical entropy) and $d_\sigma \geq c > 0$ the scheme
satisfies the weak BV estimates (\ref{weakBV1}) with $\lambda_\sigma \equiv 1$.
\item TeCNO scheme \\
In \cite{FMT_2013} essential non-oscillatory entropy stable (TeCNO) schemes for system of conservation laws have been introduced. The numerical flux has the form
\begin{equation*}
\bv{\bfF} := \bv{\tilde{\bfF}^r} - \frac  1 2 D_\sigma  (\vV_{L}^- -  \vV_{K}^+),
\end{equation*}
where $\bv{\tilde{\bfF}^r}$ is a $r-$th order entropy conservative numerical flux as above,
 $D_\sigma$ is a positive definite matrix and
$\vV_{L}^-$,  $\vV_{K}^+$ are the cell interface values of a $r-$th order accurate ENO reconstruction. The scheme is formally $r-$th order accurate, entropy stable and satisfies weak BV estimates (\ref{weakBV1}) under the above mentioned assumptions on $\frac{d^2\eta(\vU)}{d \vU^2} $, see \cite{FMT_2013}, \cite{FKMT17}.
\end{itemize}

\subsection{Numerical schemes for the barotropic Euler system}

Our aim is to prove the convergence of some entropy stable finite volume schemes for the multidimensional Euler equations.
More precisely, we show that  the sequence of numerical solutions generate the Young measure that represents the dissipative measure-valued solution.  To illustrate the ideas we will consider scheme \eqref{scheme} with  a Lax-Friedrichs-type numerical flux $\bfF_h$ whose value on a face $\sigma = K|L$ is given by
\begin{align}\label{num_flux}
\bv{\bfF}\defeq \avrg{\bff(\vU_h)}-\bv{\lambda} \jump{\vU_h}.
\end{align}
Here the global diffusion coefficient is
 $\displaystyle \bv{\lambda} \equiv \lambda\defeq \max_{K\in\grid}\max_{s=1,\ldots,N} |\lambda^s(\vU_K)|$, while the local diffusion coefficient is
 $\displaystyle \bv{\lambda} \defeq \max_{s=1,\ldots,N} \max( |\lambda^s (\vU_K)|, |\lambda^s(\vU_L)|).$
 As already mentioned above $\lambda^s$ is the $s-$th eigenvalue of the corresponding Jacobian matrix $\bff'(\vU_h)$. Finite volume scheme with the local diffusion coefficient is also called in the literature the Rusanov scheme.

 Substituting  $\vU=[\vr,\vm]^T$ and $\bff(\vU)=[\vm, \frac{\vm\otimes\vm}{\vr}+p\mathbb{I}]^T,$ $p=a\vr^{\gamma},$  into \eqref{num_flux}    we derive the semi-discrete finite volume scheme for the barotropic Euler system:
 \begin{subequations}\label{scheme_bE1}
 \begin{align}
\Dt{\vr_K(t)}&+\diht{\vm_h(t)} - \dfrac{1}{h} \sum_{\sigma \in \partial K}  \bv{\lambda}  \jump{\vr_h(t)} ( \bfn_K^+\cdot\bfe_s) = 0, \label{cont_bE1} \\
\Dt{\vm_K(t)}&+\diht{\left(\frac{\vm_h(t)\otimes\vm_h(t)}{\vr_h(t)}+p_h(t)\mathbb{I}\right)} -\dfrac{1}{h} \sum_{\sigma \in \partial K} \bv{\lambda} \jump{\vm_h(t)} ( \bfn_K^+\cdot\bfe_s)= 0, \  t>0, \ K \in \grid. &\label{mom_bE1}
\end{align}
\end{subequations}
Note that $(\bfn_K^+ \cdot\bfe_s)$ determines whether the jump belongs to  in- or outgoing fluxes.
For the global numerical diffusion coefficient \eqref{num_flux} gives
\begin{subequations}\label{scheme_bE}
\begin{align}
\Dt{\vr_K(t)}&+\diht{\vm_h(t)}- \lambda h \laph{\vr_h(t)}=0, \label{cont_bE} &\\
\Dt{\vm_K(t)}&+\diht{\left(\frac{\vm_h(t)\otimes\vm_h(t)}{\vr_h(t)}+p_h(t)\mathbb{I}\right)}-\lambda  h\laph{\vm_h(t)}=0,\ t >0 \ K \in \grid. \label{mom_bE}&
\end{align}
Recall that  $p_h(t)=p(\vr_h(t))=a\vr_h^{\gamma}(t),$ $\gamma>1,$ $a>0,$  cf. \eqref{pressure_bE}.
The initial conditions for the schemes \eqref{scheme_bE1} and \eqref{scheme_bE} are prescribed as follows
\begin{align*}
&(\vr_K(0),\vm_K(0))^T=(\dv{(\Pi_h\vr^0)},\dv{(\Pi_h\vm^0)})^T,  \quad K \in \grid.
\end{align*}
\end{subequations}

\subsection{Numerical schemes for the complete Euler system}

Analogously as above, we insert  the corresponding vector of conservative variables $\vU=[\vr,\vm,E]^T$ and  the flux function $ \bff(\vU)=\left[\vm,\frac{\vm\otimes\vm}{\vr}+p\mathbb I, \frac{\vm}{\vr}(E+p)\right]^T,$ $p=(\gamma-1)(E-\frac{1}{2}\frac{|\vm|^2}{\vr}),$ into the definition of the Lax-Friedrichs-type numerical flux \eqref{num_flux} to obtain the finite volume scheme
\begin{subequations}\label{scheme_cE1}
\begin{align}
\Dt{\vr_K(t)}&+\diht{\vm_h(t)} - \dfrac{1}{h}  \sum_{\sigma \in \partial K} \bv{\lambda}  \jump{\vr_h(t)} ( \bfn_K^+\cdot\bfe_s) = 0, \label{cont_cE1} \\
\Dt{\vm_K(t)}&+\diht{\left(\frac{\vm_h(t)\otimes\vm_h(t)}{\vr_h(t)}+p_h(t)\mathbb{I}\right)} -\dfrac{1}{h} \sum_{\sigma \in \partial K} \bv{\lambda} \jump{\vm_h(t)}  ( \bfn_K^+\cdot\bfe_s) = 0,   \label{mom_cE1}\\
\Dt{E_K(t)}&+\diht{\left(\frac{\vm_h(t)}{\vr_h(t)}(E_h(t)+p_h(t))\right)}
- \dfrac{1}{h}  \sum_{\sigma \in \partial K} \bv{\lambda}  \jump{E_h(t)} ( \bfn_K^+\cdot\bfe_s) = 0,
\ t>0, \ K \in \grid.  \label{en_cE1}
\end{align}
\end{subequations}
 The global  numerical viscosity coefficient yields analogously as above
\begin{subequations}\label{scheme_cE}
\begin{align}
\Dt{\vr_K(t)}&+\diht{\vm_h(t)}-\lambda  h\laph{\vr_h(t)}=0, \label{cont_cE} \\
\Dt{\vm_K(t)}&+\diht{\left(\frac{\vm_h(t)\otimes\vm_h(t)}{\vr_h(t)}+p_h(t)\mathbb{I}\right)}-\lambda  h\laph{\vm_h(t)}=0, \label{mom_cE} \\
\Dt{E_K(t)}&+\diht{\left(\frac{\vm_h(t)}{\vr_h(t)}(E_h(t)+p_h(t))\right)}-\lambda  h\laph{E_h(t)}=0, \ t>0, \ K \in \grid.\label{en_cE}
\end{align}
Recall that $p_h(t)=(\gamma-1)\left(E_h(t)-\frac{1}{2}\frac{|\vm_h(t)|^2}{\vr_h(t)}\right).$
Finite volume schemes \eqref{scheme_cE1} and \eqref{scheme_cE} are  equipped with the initial conditions
\begin{align*}
&(\vr_K(0),\vm_K(0), E_K(0))^T=(\dv{(\Pi_h\vr^0)},\dv{(\Pi_h\vm^0)},\dv{(\Pi_h E^0)})^T,  \quad K \in \grid.
\end{align*}
\end{subequations}

Note that all finite volume schemes for the Euler systems defined above require the positivity of $\vr_h(t),$ $t>0.$

\section{Positivity of the discrete density and pressure}\label{S:positive}

As observed above, \emph{positivity} of the discrete density is necessary for the scheme to be properly defined. Starting from positive
initial density $\vr_h (0) > 0$, the semi--discrete scheme admits the unique solution defined on a maximal time interval $[0, T_{\rm max})$,
$T_{\rm max} > 0$. In general, $T_{\rm max}$ may even depend on $h$ and shrink to zero for $h \to 0$. In order to avoid
this difficulty, suitable {\it a priori} bounds that would guarantee $\vr_h(t)$ being bounded below away from zero must be established. This problem has been treated
for the relevant \emph{fully discrete} schemes by e.g., Perthame and Shu \cite{PerthShu}. 
Note that these results are always \emph{conditioned} by
a kind of CFL stability condition or other relevant restrictions. Seen from this perspective, the existence of an \emph{unconditional} result for the semi--discrete scheme seems to be out of reach both at the discrete level and for the limit Euler system. To eliminate this problem, we shall therefore impose positivity of $\vr_h$ as our principal working
hypothesis:
\begin{equation} \label{pHYP}
\vr_h (t) \geq \underline{\vr} > 0 \ \mbox{uniformly for}\ t \in [0,T],\ h \to 0
\end{equation}
for a positive constant $\underline{\vr}$.

Positivity of the density at the \emph{discrete level}, meaning with the lower bound $\underline{\vr}_h$ depending on the step $h$, can be achieved by adding lower order
``damping'' terms to the right--hand side of the momentum equation \eqref{mom_cE1} and the energy equation \eqref{en_cE1},  namely,
\[
- h^\alpha \frac{\vc{m}_h(t)}{\vr_h(t)} \ \mbox{and} \ - h^\alpha \left| \frac{\vc{m}_h(t)}{\vr_h(t)} \right|^2.
\]
Indeed adding these terms would:
\begin{itemize}
\item
leave the entropy balance in the same form;
\item
produce a uniform upper-bound on the discrete velocity
\begin{equation} \label{vHyp}
\vc{u}_h(t) \equiv \frac{\vc{m}_h(t)}{\vr_h(t)}, \ \mbox{specifically}\ \vc{u}_h \in L^2(0,T; L^\infty(\Omega; R^N)),
\end{equation}
resulting from boundedness of the discrete total energy $E_h(t)$.
\end{itemize}

In the next section, we show how positivity of the density can be obtained under the hypothesis (\ref{vHyp}).

\subsection{Conditional positivity of the density}

In this section, we show positivity of the density under the extra hypothesis on the approximate velocity,
\begin{equation} \label{vHYPP}
\vc{u}_h \equiv \frac{\vc{m}_h(t)}{\vr_h(t)} \in L^2(0,T; L^\infty(\Omega)).
\end{equation}
We restrict ourselves to the case of constant numerical viscosities.

Thus the first two equations of the numerical scheme for the Euler system  read,
\begin{subequations}\label{aux_scheme_bE}
\begin{align}
\Dt{\vr_K(t)}&+\diht{(\vr_h(t)\vu_h(t))}-\lambda  h\laph{\vr_h(t)}=0, \label{aux_cont_bE} \\
\Dt{(\vr_K(t)\vu_K(t))}&+\diht{\big(\vr_h(t)(\vu_h(t)\otimes\vu_h(t))+p_h(t)\mathbb{I}\big)}-\lambda  h\laph{(\vr_h(t)\vu_h(t))}=0, \label{aux_mom_bE}
\end{align}
\end{subequations}
equipped with the relevant initial conditions.

\begin{Lemma}\label{LemP}
Let $\vr_h(0)>0,$ and let a couple $(\vr_h(t),\vu_h(t)),$ $t>0,$ satisfy the discrete continuity equation \eqref{aux_cont_bE}, where
$\vu_h$ belongs to the class \eqref{vHYPP}.
\\
Then
\begin{align*}
 \dv{\vr}(t)  > \underline{\vr}_h >  0, \quad t \in [0,T],\  K \in \grid.
\end{align*}
\end{Lemma}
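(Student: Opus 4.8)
The plan is to regard the discrete continuity equation \eqref{aux_cont_bE} as a linear system of ordinary differential equations for the vector $\big(\vr_K(t)\big)_{K\in\grid}$, whose coefficients are governed by the velocity field $\vu_h$ and the (spatially constant) viscosity $\lambda$, and then to extract an exponential-in-time lower bound by combining a discrete minimum principle with Grönwall's lemma.

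First I would expand the operators $\diht{}$ and $\laph{}$ in \eqref{aux_cont_bE} and collect, for a fixed cell $K\in\grid$, the coefficient multiplying each nodal value. Writing $L=K+h\bfe_s$ and $J=K-h\bfe_s$ for the two neighbours of $K$ in direction $s$, and denoting by $u^s$ the $s$-th component of $\vu_h$, regrouping gives
\[
\Dt{\vr_K}=\sum_{s=1}^N\left[\left(\frac{\lambda}{h}-\frac{u^s_{L}}{2h}\right)\vr_{L}+\left(\frac{\lambda}{h}+\frac{u^s_{J}}{2h}\right)\vr_{J}\right]-\frac{2N\lambda}{h}\,\vr_K .
\]
The structural observation on which everything rests is that the off-diagonal coefficients $\tfrac1{2h}(2\lambda-u^s_{L})$ and $\tfrac1{2h}(2\lambda+u^s_{J})$ are nonnegative: by definition the global Lax--Friedrichs coefficient satisfies $\lambda=\max_{K,s}|\lambda^s(\vU_K)|\geq\max_{K,s}|u^s_K|$ --- the eigenvalues of the barotropic Euler flux being $u^s\pm\sqrt{p'(\vr)}$ --- so that $2\lambda\geq|u^s|$ at every cell. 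Hence the governing matrix has nonnegative off-diagonal entries, i.e. the scheme is of monotone, positivity--preserving type.

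Next I set $m(t)=\min_{K\in\grid}\vr_K(t)$. Since $\grid$ is finite and each $\vr_K$ is absolutely continuous on $[0,T]$, $m$ is absolutely continuous and, at almost every $t$, $m'(t)=\Dt{\vr_{K}}$ for any cell $K$ realising the minimum. Evaluating the displayed identity at such a minimising cell, using $\vr_{L},\vr_{J}\geq\vr_K=m(t)\geq0$ together with the nonnegativity of the off-diagonal coefficients, the entire diffusive part drops out and the $\lambda$--terms cancel, leaving only the convective defect:
\[
m'(t)\geq\frac{m(t)}{2h}\sum_{s=1}^N\big(u^s_{J}-u^s_{L}\big)\geq-\frac{N}{h}\,\|\vu_h(t)\|_{L^\infty}\,m(t).
\]
This is precisely where hypothesis \eqref{vHYPP} enters: as $\|\vu_h(\cdot)\|_{L^\infty}\in L^2(0,T)\subset L^1(0,T)$, the coefficient $\beta(t):=\tfrac{N}{h}\|\vu_h(t)\|_{L^\infty}$ is integrable on $[0,T]$. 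Grönwall's inequality then yields
\[
\vr_K(t)\geq m(t)\geq\Big(\min_{K}\vr_K(0)\Big)\exp\!\left(-\int_0^T\beta(s)\,\dif s\right)=:\Un{\vr}_h>0,
\]
which is the asserted bound; the factor $h^{-1}$ in $\beta$ makes $\Un{\vr}_h$ deteriorate as $h\to0$, in agreement with the stated $h$--dependence of the lower bound.

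The main obstacle is the second step. The convective flux is discretised by a central difference, which is antisymmetric and carries no sign by itself, so positivity is not visible directly; the key is to absorb it into the numerical diffusion and verify the monotone structure $2\lambda\geq|u^s|$, after which the discrete Laplacian is seen to raise the minimum while only the velocity variation, controlled by $\|\vu_h\|_{L^\infty}$, can lower it. A secondary, routine technical point is the envelope-derivative fact that a pointwise minimum of finitely many absolutely continuous functions is differentiable almost everywhere with derivative equal to that of an active index.
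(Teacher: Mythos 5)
Your proposal is correct and follows essentially the same route as the paper: both arguments evaluate the scheme at a cell realising the minimum, use $\lambda\ge|u^s|$ to show that the combined convective--diffusive part cannot decrease the minimum (your nonnegative off-diagonal coefficients are exactly the paper's absorption of the central difference into half the numerical diffusion), arrive at the identical inequality $\Dt{\vr_K}\ge-\vr_K\,\diht{\vu_h}$, and conclude by Gr\"onwall using the $L^1(0,T)$ integrability of $\|\vu_h\|_{L^\infty}/h$ from \eqref{vHYPP}. Your explicit appeal to the envelope-derivative lemma for the minimum over finitely many cells is a welcome point of rigour that the paper leaves implicit.
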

\begin{proof}

Let $\dv{\vr}(t)$ be such that $\dv{\vr}(t)\leq \vr_L(t)$ for all $L\in\grid.$
Equation \eqref{aux_cont_bE}  can be rewritten as
\begin{equation}\label{ce}
\begin{aligned}
\Dt{\dv{\vr}(t)} &
=-\sum_{s=1}^N \der{\vr_h}\avrgK{u_h^s}- \dv{\vr}\diht{\vu_h}-\sum_{s=1}^N\laphs{\vr}\left(\frac{h^2}{2}\der{u_h^s}-\lambda h\right).&
\end{aligned}
\end{equation}
By the definition of $\lambda$ and
the minimality of $\dv{\vr}(t)$
we can conclude that
\begin{align*}
-\der{\vr_h}\avrgK{u_h^s} &= - \frac{1}{2}\left[\derp{\vr_h}+\derm{\vr_h}\right]\avrgK{u_h^s}& \\
&\geq -\frac{\lambda}{2}\left[\derp{\vr_h}-\derm{\vr_h}\right]= -\frac{\lambda h}{2}\laphs{\vr_h},&\\
-\laphs{\vr_h} \left(\frac{h^2}{2}\der{u_h^s}-\lambda h+\frac{\lambda h}{2}\right)&=  - \frac{h}{4}\laphs{\vr_h}   \left(\dvpp{u}^s -\lambda\right)+&\\
&+\frac{h}{4}\laphs{\vr_h} \left(\dvmm{u}^s+\lambda\right) \geq 0 ,
\end{align*}
and consequently, equation  \eqref{ce} becomes
\begin{align*}
\Dt{\dv{\vr}(t)} &\geq  -\dv{\vr}\diht{\vu_h}.&
\end{align*}

As $\vc{u}_h$ satisfies (\ref{vHYPP}), we easily deduce a bound on the discrete divergence,
\[
{\diht{\vu_h}} \in L^2(0,T; L^\infty(\Omega)).
\]
Thus
the Gronwall inequality together with the assumption $\dv{\vr}(0)>0,$ $K\in\grid,$ finally yields for all $L\in\grid$ that
$\dvpp{\vr}(t) \geq \dv{\vr}(t) >0 ,$ $t\in [0,T].$
\end{proof}

Under the hypothesis (\ref{vHYPP}), setting $\vm_h\equiv \vr_h\vu_h$
and comparing \eqref{aux_cont_bE} with \eqref{cont_bE} or \eqref{cont_cE}, we realize that both formulations  are equivalent.
Analogous results hold for the schemes \eqref{scheme_bE1} and \eqref{scheme_cE1} with the local Lax-Friedrichs flux for both Euler systems, respectively.

\subsection{Positivity of discrete pressure}

Recall the entropy $\eta(\vU_h)=\vr_h \mathcal{S}_{\chi} (\vU_h),$ with $\mathcal{S}_\chi=\chi\circ S$ as in  Remark~\ref{R5}, is a concave function. The discrete entropy inequality \eqref{dis_en_ineq} holds,  cf.  \cite{Har83}, and may be used similarly to \cite{Tadmor86} for showing the minimal entropy principle. In particular, the relation between the initial density and temperature is time invariant and gives rise to the positivity of pressure.

\begin{Lemma}\label{LemDT}
Let the initial density and temperature for the complete Euler system  satisfy
\begin{align}\label{den_ini_cE}
0 < \dv{\vr}(0)\leq \Ov{C}(\dv{\vt}(0))^{\cv}, \ \Ov{C}>0, \ \mbox{ for all } K \in \grid,
\end{align}
where $\displaystyle \dv{\vt}(0) = \frac{(\gamma-1)}{\dv{\vr}(0)}\left(\dv{E}(0)-\frac{1}{2}\frac{|\dv{\vm}(0)|^2}{\dv{\vr}(0)}\right).$ \\
Then, for all $K\in\grid,$ it holds that
\begin{align}\label{den_temp}
0<\dv{\vr}(t)\leq \Ov{C}(\dv{\vt}(t))^{\cv},
\quad t \in [0,T],
\end{align}
where $ \displaystyle \dv{\vt}(t) = \frac{(\gamma-1)}{\dv{\vr}(t)}\left(\dv{E}(t)-\frac{1}{2}\frac{|\dv{\vm}(t)|^2}{\dv{\vr}(t)}\right).$
In particular, $\dv{p}(t)=\dv{\vr}(t)\dv{\vt}(t) >0,$ $t \in [0,T].$
\end{Lemma}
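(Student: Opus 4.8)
The plan is to recast the constraint \eqref{den_ini_cE} as a pointwise lower bound on the specific entropy and then to propagate this bound in time by a discrete \emph{minimum entropy principle} in the spirit of Tadmor \cite{Tadmor86}. First I would note that, by the constitutive relations \eqref{pressure_cE}, the inequality $\dv{\vr} \leq \Ov{C}\,\dv{\vt}^{\,\cv}$ is equivalent to $\dv{\vt}^{\,c_v}/\dv{\vr} \geq 1/\Ov{C}$, that is, to $s_K := \log\big(\dv{\vt}^{\,c_v}/\dv{\vr}\big) \geq \underline{s}$ with $\underline{s} := -\log\Ov{C}$. Thus \eqref{den_ini_cE} reads $s_K(0) \geq \underline{s}$ for all $K \in \grid$, and the assertion \eqref{den_temp} is exactly $s_K(t) \geq \underline{s}$. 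Indeed, since $1/c_v = \gamma-1$, one has $\dv{\vt}(t) = \dv{\vr}^{\,\gamma-1}(t)\,e^{s_K(t)/c_v}$ and hence $\dv{p}(t) = \dv{\vr}(t)\dv{\vt}(t) = \dv{\vr}^{\,\gamma}(t)\,e^{s_K(t)/c_v}$, so once $\dv{\vr}(t) > 0$ and $s_K(t) \geq \underline{s}$ are known, positivity of the pressure is immediate. Positivity of the density itself is not the point here: the bound $\dv{\vr}(t) \geq \underline{\vr}_h > 0$ is supplied by Lemma~\ref{LemP}.

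For the propagation I would exploit the whole family of renormalized entropies $\eta = \vr\mathcal{S}_\chi$, $\mathcal{S}_\chi = \chi\circ S$, which are concave (cf.\ Remark~\ref{R4}) and for which the discrete entropy inequality \eqref{dis_en_ineq} holds for the Lax--Friedrichs/Rusanov flux \eqref{num_flux}, cf.\ \cite{Har83}. The trick is to choose $\chi$ adapted to the threshold: take $\chi(s) = \min(s - \underline{s},\,0)$, which is increasing, concave and bounded above by $0$, hence admissible in the sense of \eqref{S_chi}. Writing $\mathcal{S}_\chi = \chi(s)$, one gets $\eta = \vr\,\chi(s) \leq 0$, with equality precisely when $s \geq \underline{s}$. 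Summing \eqref{dis_en_ineq} over all $K\in\grid$ with weight $h^N$ and using that $h^N\sum_{K\in\grid}\dih{\bfQ_h} = 0$ by periodicity (discrete Gauss/telescoping), I obtain
\[
\Dt \sum_{K\in\grid}\vr_K(t)\,\chi(s_K(t))\,h^N \;\geq\; 0,
\]
so that $\sum_{K}\vr_K(t)\chi(s_K(t))\,h^N \geq \sum_{K}\vr_K(0)\chi(s_K(0))\,h^N = 0$ by the initial hypothesis. Since every summand is non-positive ($\vr_K > 0$, $\chi \leq 0$), each must vanish, whence $\chi(s_K(t)) = 0$, i.e.\ $s_K(t) \geq \underline{s}$, for every $K$ and every admissible $t$. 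Because $\min(\cdot,0)$ is merely Lipschitz, I would run this step first with smooth increasing concave approximations $\chi_\delta \leq 0$ and pass to the limit $\delta \to 0$.

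The genuine obstacle is not the estimate but a question of well-posedness: the entropy $s_K$, and therefore the argument above, only makes sense while the internal energy $\dv{E} - \tfrac12|\dv{\vm}|^2/\dv{\vr}$ (equivalently $\dv{\vt}$, equivalently $\dv{p}$) stays strictly positive, keeping $\eta = \vr\mathcal{S}_\chi$ in its effective domain. I would therefore phrase the result as a continuation argument. Let $[0,T^*)$ be the maximal subinterval of $[0,T]$ on which $\dv{p}(t) > 0$ for all $K$; on it the entropies are well defined, the step above applies and yields $s_K(t) \geq \underline{s}$, which in turn gives the \emph{quantitative}, time-uniform lower bound $\dv{p}(t) = \dv{\vr}^{\,\gamma}(t)\,e^{s_K(t)/c_v} \geq \underline{\vr}_h^{\,\gamma}\,e^{\underline{s}/c_v} > 0$. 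This strict positivity, bounded away from zero uniformly on $[0,T^*)$, prevents $\dv{p}$ from degenerating as $t\to T^*$, so by continuity $T^* = T$ and the bound persists on all of $[0,T]$. Assembling the three ingredients — the entropy reformulation of \eqref{den_ini_cE}, the summed minimum-principle argument, and the continuation — delivers \eqref{den_temp} and, in particular, $\dv{p}(t) = \dv{\vr}(t)\dv{\vt}(t) > 0$ on $[0,T]$.
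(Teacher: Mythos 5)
Your proposal is correct and follows essentially the same route as the paper: both recast the hypothesis \eqref{den_ini_cE} as a lower bound on the argument of a renormalized entropy $\eta=\vr\mathcal{S}_\chi$ with a cut-off $\chi$ vanishing above the threshold and negative below it, sum the discrete entropy inequality \eqref{dis_en_ineq} over the (periodic) grid so the flux terms cancel, and conclude that each non-positive summand must vanish, which is exactly \eqref{den_temp}. Your extra care with smoothing $\chi=\min(\cdot,0)$ and with the continuation argument keeping the entropy in its effective domain are refinements the paper leaves implicit, not a different method.
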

\begin{proof}
Recall that the renormalized entropy in our case, cf. \eqref{pressure_cE} and \eqref{S_cE}, can be rewritten as
\begin{align*}
\eta &=\vr \mathcal{S}_{\chi}=\vr\chi\left( \log \left(
\frac{(\gamma-1)}{\vr^{\gamma}}\left(E-\frac{1}{2}\frac{|\vm|^2}{\vr}\right)
\right)\right).
\end{align*}
Following \cite{BF} we now take the function $\chi$ satisfying \eqref{S_chi} to be such that
 \begin{align}\label{chi}
\chi'(z) \geq 0, \quad \chi(z)=\left\{\begin{array}{ll}
 <0, & z < z_0 \\
 0, & z \geq z_0, \\
\end{array}  \right., \quad z_0=(\gamma-1)\ln(1/\Ov{C}).
 \end{align}
Under the assumption  \eqref{den_ini_cE} it holds that
$$  \log \left(
\frac{(\gamma-1)}{\dv{\vr}(0)^{\gamma}}\left(\dv{E}(0)-\frac{1}{2}\frac{|\dv{\vm}(0)|^2}{\dv{\vr}(0)}\right)
\right)=\log\left(\frac{(\dv{\vt}(0))^{\cv}}{\dv{\vr}(0)}\right)\geq z_0,$$
which combined with   \eqref{chi} implies $\eta(\dv{\vU}(0))= 0.$
Thus, the sum of the discrete entropy inequality \eqref{dis_en_ineq}  integrated in time yields
\begin{align}\label{aux1}
\sum_{K\in\grid}{\eta(\dv{\vU}(t))}\geq  \sum_{K\in\grid}{\eta(\dv{\vU}(0))}=0, \quad  t \in [0,T].
\end{align}
From inequality \eqref{aux1} it directly follows that
\begin{align*}
\sum_{K\in\grid} \dv{\vr}(t) \chi\left(
\log \left(
\frac{(\gamma-1)}{\dv{\vr}(t)^{\gamma}}\left(\dv{E}(t)-\frac{1}{2}\frac{|\dv{\vm}(t)|^2}{\dv{\vr}(t)}\right)
\right)
\right)=
\sum_{K\in\grid} \dv{\vr}(t) \chi\left( \log\left(\frac{(\dv{\vt}(t))^{\cv}}{\dv{\vr}(t)}\right)\right) \geq 0.
\end{align*}
Consequently, employing  \eqref{chi} and the positivity of $\dv{\vr}(t),$ we get that $$\log \left(
\frac{(\gamma-1)}{\dv{\vr}(t)^{\gamma}}\left(\dv{E}(t)-\frac{1}{2}\frac{|\dv{\vm}(t)|^2}{\dv{\vr}(t)}\right)
\right)=\log\left(\frac{(\dv{\vt}(t))^{\cv}}{\dv{\vr}(t)}\right) \geq z_0,\quad t \in [0,T],$$ which concludes the proof.
\end{proof}

\begin{Lemma}\label{L}
Let $\vU_h=[\vr_h,\vm_h,E_h]$ be a  solution of the complete Euler system constructed via the numerical schemes \eqref{scheme_cE1} or \eqref{scheme_cE}. In addition, suppose that
\begin{align*}
0 < \Un{\vr} \leq \vr_h(t), \ E_h(t) \leq \Ov{E}  \ \mbox{uniformly for } h\rightarrow 0,  \ t \in [0,T]
\end{align*}
for some constants $\Un{\vr},$ $\Ov{E}.$ \\
Then there exist constants $\Ov{\vr},$ $\Un{\vt},$ $ \Ov{\vt},$ $\Un{p},$ $\Ov{p},$ $\Ov{\vm}$ such that
\begin{align}\label{nerovnosti}
\vr_h \leq \Ov{\vr}(t), \ |\vm_h(t)| \leq \Ov{\vm},\ \ 0 < \Un{\vt} \leq \vt_h(t) \leq \Ov{\vt},  \ 0 < \Un{p} \leq p_h(t) \leq \Ov{p} \ \mbox{ uniformly for } h \rightarrow 0, \ t \in [0,T].
\end{align}
\end{Lemma}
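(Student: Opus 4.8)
The plan is to derive all six bounds purely algebraically from the perfect--gas constitutive relations \eqref{pressure_cE}, the hypotheses $\vr_h \geq \Un{\vr}$ and $E_h \leq \Ov{E}$, and the minimal entropy principle already established in Lemma~\ref{LemDT}. The organising observation is that the internal energy satisfies $\vr_h e(\vr_h,\vt_h) = \tfrac{p_h}{\gamma-1} = E_h - \tfrac{1}{2}\tfrac{|\vm_h|^2}{\vr_h}$, so that both the internal and the kinetic parts of the total energy are separately controlled by $\Ov{E}$. This single identity, together with the positivity of each part, will drive essentially everything.

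First I would obtain the upper bound on the pressure. Since the kinetic energy $\tfrac{1}{2}|\vm_h|^2/\vr_h$ is non--negative, the identity above gives $\tfrac{p_h}{\gamma-1} = E_h - \tfrac{1}{2}|\vm_h|^2/\vr_h \leq E_h \leq \Ov{E}$, whence $p_h \leq (\gamma-1)\Ov{E} =: \Ov{p}$. This step uses only the energy bound and the positivity of $\vr_h$, and requires no entropy information.

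The delicate point, which I expect to be the main obstacle, is the \emph{lower} bound on the pressure, equivalently on the temperature: a priori all of the energy could concentrate in the kinetic part and let $p_h \to 0$. Here I would invoke Lemma~\ref{LemDT}, which rests on the discrete entropy inequality \eqref{dis_en_ineq}. The minimal entropy principle gives $\vr_h(t) \leq \Ov{C}\,(\vt_h(t))^{\cv}$, hence $\vt_h(t) \geq (\vr_h(t)/\Ov{C})^{\gamma-1} \geq (\Un{\vr}/\Ov{C})^{\gamma-1} =: \Un{\vt} > 0$. Combined with $\vr_h \geq \Un{\vr}$ this yields $p_h = \vr_h\vt_h \geq \Un{\vr}\,\Un{\vt} =: \Un{p} > 0$. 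Thus the only genuinely nontrivial bound is supplied entirely by the previously established entropy argument.

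The remaining bounds then follow by combining the two pressure estimates with the density lower bound. The upper bound on the temperature is $\vt_h = p_h/\vr_h \leq \Ov{p}/\Un{\vr} =: \Ov{\vt}$; the upper bound on the density is $\vr_h = p_h/\vt_h \leq \Ov{p}/\Un{\vt} =: \Ov{\vr}$; and the momentum bound follows from $\tfrac{1}{2}|\vm_h|^2/\vr_h \leq E_h \leq \Ov{E}$, so that $|\vm_h| \leq \sqrt{2\,\Ov{\vr}\,\Ov{E}} =: \Ov{\vm}$ once the upper density bound is in hand. Every constant produced in this chain depends only on $\Un{\vr}$, $\Ov{E}$, $\gamma$ and the fixed $\Ov{C}$ from Lemma~\ref{LemDT}, and in particular is independent of $h$ and $t$, so all the estimates in \eqref{nerovnosti} are uniform as claimed.
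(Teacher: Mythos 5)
Your proposal is correct and follows essentially the same route as the paper: both use the identity $p_h=(\gamma-1)\bigl(E_h-\tfrac{1}{2}|\vc{m}_h|^2/\vr_h\bigr)$ together with the energy bound for $\Ov{p}$, the minimal entropy principle of Lemma~\ref{LemDT} (i.e.\ $\vr_h\leq\Ov{C}\vt_h^{1/(\gamma-1)}$) combined with $\vr_h\geq\Un{\vr}$ for the lower bounds on $\vt_h$ and $p_h$ and the upper bound on $\vr_h$, and finally $|\vc{m}_h|^2\leq 2\vr_h E_h$ for $\Ov{\vc{m}}$. The only difference is the order in which the equivalent inequalities are rearranged, which is immaterial.
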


 \begin{proof}
Since we already know that the pressure $p_h$ is positive, we have
\begin{align*}
0 < p_h=(\gamma-1)\left( E_h - \frac{1}{2}\frac{|\vm_h|^2}{\vr_h}\right) \leq \Ov{E},
\end{align*}
which yields the existence of $\Ov{p}$  satisfying \eqref{nerovnosti}.
From Lemma~\ref{LemDT} we also have
\begin{align*}
0<\vr_h\leq \Ov{C}(\vt_h)^{\cv}.
\end{align*}
Therefore,
\begin{align*}
0 < \Un{\vr}^\gamma \leq \vr_h^{\gamma} \leq \Ov{C}^{\gamma-1}\vr_h\vt_h =  \Ov{C}^{\gamma-1} p_h \leq  \Ov{C}^{\gamma-1} \Ov{E},
\end{align*}
which gives the existence of $\Ov{\vr},$ $\Un{p},$ $\Un{\vt},$ $\Ov{\vt}.$ Finally,
\begin{align*}
|\vm_h|^2 \leq 2\vr_h E_h \leq 2\Ov{\vr}\Ov{E}.
\end{align*}

\end{proof}

\section{Stability of numerical schemes}

We show the stability of the numerical schemes defined in Section~\ref{S:scheme} by deriving   a priori estimates.

\subsection{A priori estimates for the barotropic Euler system}

Firstly, we sum up the continuity equation \eqref{cont_bE} (or \eqref{cont_bE1}) multiplied by $h^N$ for all $K\in \grid$ and integrate in time to get
\begin{align*}
\intO{\vr_h(t)} = \intO{\vr_h(0)}.
\end{align*}
The positivity of $\vr_h(t)$ then indicates  $\vr_h \in L^{\infty}(0,T;L^1(\Omega)).$
Further we know that our entropy stable finite volume scheme \eqref{scheme_bE} directly yields the  discrete entropy inequality.
 It is important to point out that for barotropic flow the energy plays the role of entropy (with a negative sign). Denoting
\begin{align}\label{entropy_bE}
\eta(\dv{\vU})= \frac{1}{2}\frac{|\dv{\vm}|^2}{\dv{\vr}} + P(\dv{\vr}),
\end{align}
we obtain for the entropy stable finite volume schemes the discrete energy inequality
\begin{align}\label{dis_en_bE}
\Dt{}\dv{\eta(\vU}(t))+\dih{\bfQ_h(t)} \leq 0, \ K \in \grid.
\end{align}
Since the numerical entropy flux  given by \eqref{def_Q} is conservative, i.e., $\displaystyle\sum_{K\in\grid}\dih{\bfQ_h}=0,$  the integral of \eqref{dis_en_bE}  yields
\begin{align*}
\intO{\eta(\vU_h(t))} \leq \intO{\eta(\vU_h(0))}.
\end{align*}
Similarly as above, the latter  inequality gives rise to $\eta(\vU_h)\in L^{\infty}(0,T;L^1(\Omega)).$ Noting also  \eqref{pressure_bE} and \eqref{ppot},  we conclude the a priori estimates for the barotropic Euler equations:
\begin{equation}\label{apriori_bE}
\begin{aligned}
\vr_h \in   L^{\infty}(0,T;L^{\gamma}(\Omega)),\ \gamma >1,  \qquad
p_h \in L^{\infty}(0,T;L^1(\Omega)), \\
  \sqrt{\vr_h}\vu_h \in L^{\infty}(0,T;L^2(\Omega)),\ \mbox{ and } \ \vm_h=\vr_h \vu_h \in L^{\infty}(0,T;L^{r}(\Omega)),\ r=\frac{2\gamma}{1+\gamma} >1 .
\end{aligned}
\end{equation}

\subsection{A priori estimates for the complete Euler system}

We sum up  equation of continuity \eqref{cont_cE} (or \eqref{cont_cE1}) and energy equation \eqref{en_cE} (or \eqref{en_cE1}) multiplied by $h^N$ over $K\in\grid.$ Due to the periodic boundary conditions we get
\begin{align}\label{coonservation_cE}
&\intO{\vr_h(t)} = \intO{\vr_h(0)}, \quad \intO{E_h(t)} = \intO{E_h(0)}.&
\end{align}
In Section~\ref{S:positive} we have shown that $\vr_h(t),$ $p_h(t) >0,$ and thus also $E_h(t) >0$ for $t\in[0,T].$
The conservation of mass and energy \eqref{coonservation_cE} combined with  \eqref{den_temp}  imply the a priori estimates for the complete Euler system. Namely,
\begin{equation}
\begin{aligned}\label{apriori_cE}
\vr_h \in  L^{\infty}(0,T;L^{\gamma}(\Omega)), \ \gamma >1, \ p_h \in L^{\infty}(0,T;L^1(\Omega)), \ E_h \in L^{\infty}(0,T;L^1(\Omega)) \\
  \sqrt{\vr_h}\vu_h \in L^{\infty}(0,T;L^2(\Omega)),\ \mbox{ and } \ \vm_h=\vr_h \vu_h \in L^{\infty}(0,T;L^{r}(\Omega)),  \ r=\frac{2\gamma}{1+\gamma} >1 .
\end{aligned}
\end{equation}

\section{Consistency}

In this section our aim is to show consistency of the entropy stable finite volume schemes \eqref{scheme_bE1}, \eqref{scheme_bE}
and \eqref{scheme_cE1}, \eqref{scheme_cE}.  We derive suitable formulations of the continuity and momentum equations that are the same for the barotropic and the complete Euler systems. In addition, for the complete Euler system, we also show consistency of the entropy inequality.

\subsection{Consistency formulation of continuity and momentum equations}

Let us multiply the continuity equations \eqref{cont_bE1} or \eqref{cont_bE} (for the barotropic Euler) and \eqref{cont_cE1} or \eqref{cont_cE} (for the complete Euler) by $h^N\dv{(\Pi_h\varphi(t))},$ with $\varphi \in C^3([0,T)\times\Omega),$ and the momentum equations \eqref{mom_bE1} or \eqref{mom_bE} (for the barotropic Euler)  and \eqref{mom_cE1} or \eqref{mom_cE}  (for the complete Euler) by $h^N\dv{(\Pi_h\boldsymbol{\varphi}(t))},$ with  $\boldsymbol{\varphi} \in C^3([0,T)\times\Omega;R^N).$
We  sum the resulting equations over $K\in\grid$ and integrate in time.
 The a priori estimates \eqref{apriori_bE} or \eqref{apriori_cE} for both the barotropic and the complete Euler systems combined with some boundedness assumptions specified below shall allow us to show the consistency.

\subsubsection*{Time derivative}

Integration by parts with respect to time leads to
\begin{align*}
h^N\intOT{\Dt\sum_{K\in\grid} {\dv{\vr}(t)}\dv{(\Pi_h\varphi(t))}}&=\intOT{\Dt{}\intO{\dv{\vr}(t)\varphi(t,x)}}& \\
&=\left[\intO{\vr_h(\tau)\varphi(\tau,\cdot)}\right]_{\tau=0}^{\tau=T}- \intOT{\intO{\vr_h(t) \pd{\varphi(t,x)}{t} }}
\end{align*}
in the continuity equations,
and similarly to
\begin{align*}
&h^N\intOT{\Dt{}\sum_{K\in\grid} \dv{\vm}(t)\cdot\dv{(\Pi_h\boldsymbol{\varphi}(t))}}& \\
&=\left[\intO{\vm_h(\tau)\cdot \boldsymbol{\varphi}(\tau,x)}\right]_{\tau=0}^{\tau=T}- \intOT{\intOh{\vm_h(t) \cdot\pd{\boldsymbol{\varphi}(t,x)}{t} }}
\end{align*}
in the momentum equations.

\subsubsection*{Convective terms}

To treat the convective terms in the continuity equations we use the discrete integration by parts and the Taylor expansion to get
\begin{align*}
&h^N\intOT{\sum_{K\in\grid} \diht{\vm_h(t)}\dv{(\Pi_h\varphi(t))}}& \\
&=-h^N\intOT{\sum_{K\in\grid} \sum_{s=1}^N m_K^s(t)\left(\intOhi{\frac{\varphi(t,x+h\bfe_s) -\varphi(t,x-h\bfe_s)}{2h}}\right)}&\\
&=-\intOT{\intO{\vm_h(t)\cdot \grad\varphi(t,x)}}+r_1,&
\end{align*}
where
term $r_1$ is estimated as follows
\begin{align}\label{r1}
r_1\lesssim h \n{C(0,T)}{\DDt{2}{\varphi}{x}(\hat{x})}\n{L^{\infty}(L^1)}{\vm_h}, \quad \mbox{where} \quad
\DDt{2}{\varphi}{x}\defeq \left(\frac{\partial^2\varphi}{\partial x_i \partial x_j}\right)_{i,j=1}^N .
\end{align}
Point $\hat{x}$  appears in the  remainder of the Taylor expansion and lies either between the points $x+h\bfe_s$ and $x$ or the points $x$ and $x-h\bfe_s.$
\medskip

We proceed analogously with the convective term in the momentum equations, i.e.,
\begin{align*}
&h^N\intOT{\sum_{K\in\grid} \diht{\bigg(\frac{\vm_h(t)\otimes\vm_h(t)}{\vr_h(t)}+p_h(t)\mathbb{I}\bigg)}\dv{(\Pi_h\boldsymbol{\varphi(t)})}}& \\
&=-h^N\intOT{\sum_{K\in\grid} \sum_{s=1}^N\sum_{z=1}^N\bigg(\frac{m_h^s(t)m_h^z(t)}{\vr_h(t)}+p_h(t)\bigg)\left(\intOhi{\frac{\varphi^z(t,x+h\bfe_s) -\varphi^z(t,x-h\bfe_s)}{2h}}\right)}&\\
&=-\intOT{\intO{\bigg(\frac{\vm_h(t)\otimes\vm_h(t)}{\vr_h(t)}+p_h(t)\mathbb{I}\bigg)\cdot\grad\boldsymbol{\varphi}(t,x)}}+r_2,&
\end{align*}
where
term $r_2$ is bounded by
\begin{align*}
r_2
\lesssim h \n{C(0,T)}{\DDt{2}{\boldsymbol{\varphi}}{x}(\hat{x})}\Bigg\{\n{L^{\infty}(L^2)}{\sqrt{\vr_h(t)}\vu_h(t)}+\n{L^{\infty}(L^1)}{p_h(t)}\Bigg\}.
\end{align*}

\subsubsection*{Numerical diffusion}

 Diffusive terms of the numerical schemes (\ref{scheme_bE1}), (\ref{scheme_bE}) and (\ref{scheme_cE1}), (\ref{scheme_cE})  will be computed separately for the  global and the local numerical diffusion coefficients $\lambda$ and $\bv{\lambda}$, respectively. For the global numerical diffusion coefficient we can write
\begin{align*}
& h^{N+1}\intOT{ \lambda\sum_{K\in\grid} \laph{\vU_h(t)}\dv{(\Pi_h\boldsymbol{\varphi}(t))}}& \nonumber \\ \nonumber
& = h^{N+1}\intOT{\lambda\sum_{K\in\grid} \dv{\vU}(t)\left(\intOhi{\sum_{s=1}^N\frac{\boldsymbol{\varphi}(x+h\bfe_s) -2\boldsymbol{\varphi}(x) +\boldsymbol{\varphi}(x-h\bfe_s)}{h^2}}\right)}& \\ 
&= h^N\intOT{\lambda\intO{\vU_h(t)\Delta_x{\boldsymbol{\varphi}(t,x)}}} + r_3.
\end{align*}
Similarly as in \eqref{r1} the remainders of the Taylor expansions result in term $r_3$ that  is bounded by
\begin{align*}
r_3 \lesssim   h   \n{C(0,T)}{\DDt{3}{\boldsymbol{\varphi}}{x}(\tilde{x})}\n{L^{\infty}(L^{1})}{\vU_h}\intOT{\lambda}.
\end{align*}
Moreover, the term stemming from the numerical diffusion is of order $\mathcal{O}(h).$ Indeed, we have
\begin{align*}
 h\intOT{\lambda\intO{\vU_h(t)\Delta_x{\boldsymbol{\varphi}(t,x)}}}&
 \leq  h T  \n{\infty}{\Delta_x{\boldsymbol{\varphi}}}\n{L^{\infty}(L^{1})}{\vU_h}\intOT{\lambda} .
\end{align*}
Assuming a finite speed of waves propagation, i.e., there  exists $\Ov{\lambda}>0$ such that $\lambda \leq \Ov{\lambda},$ the latter term goes to 0 as $h \rightarrow 0.$

\medskip
For the local  numerical diffusion coefficient we are able to prove consistency of the numerical diffusion term without the assumption on the finite speed of propagation. Indeed, considering the diffusion terms we obtain
\begin{align} \label{num_dif}
& h^{N-1}\intOT{ \sum_{K\in\grid} \sum_{\sigma \in \partial K}  \lambda_{\sigma} \jump{\vU_h(t)} ( \bfn_K^+\cdot\bfe_s)  \left(\Pi_h\boldsymbol{\varphi}(t)\right)_K
}.
\end{align}
The terms belonging to an arbitrary but fixed face $\sigma=K|L$  are
\begin{align}\label{sum_jump}
&\frac{1}{h} \intOT{   \left( \bv{\lambda} \jump{\vU_h(t)} \int_K \boldsymbol{\varphi}(t) \dx -
\bv{\lambda} \jump{\vU_h(t)} \int_L \boldsymbol{\varphi}(t)\dx \right)}.
\end{align}
Let us now consider an arbitrary but fixed point $\tilde x \in \sigma$; w.l.o.g. let $\tilde x=(\tilde x_s,  x')$, $x' \in \Bbb R^{N-1}$, $s=1,\dots, N.$
The Taylor expansion for $x = (x_s, x') \in K$ with respect to $(\tilde x_s,  x')$ gives
$$
\boldsymbol{\varphi} (x_s, x')= \boldsymbol{\varphi} (\tilde x_s, x') - \xi \partial_s \boldsymbol{\varphi} (\tilde x_s, x') + {\cal O}(h^2),
$$
where $\xi \in (0,h).$ Analogously, we have for $x = (\tilde x_s, x') \in L$
$$
\boldsymbol{\varphi} (x_s, x')= \boldsymbol{\varphi} (\tilde x_s, x') + \xi \partial_s \boldsymbol{\varphi} (\tilde x_s, x') + {\cal O}(h^2).
$$
Substituting the above Taylor expansions in (\ref{sum_jump}) we directly see that the terms multiplied by $\boldsymbol{\varphi} (\tilde x_s, x')$ vanish. The resulting terms give
\begin{eqnarray*}
&&\left|\int_0^T - \frac{1}{h} \bv{\lambda} \jump{\vU_h} \int_0^h \int_\sigma \xi \partial_s{\boldsymbol{\varphi}} (\tilde x_s, x') {\rm d} \xi {\rm d} S_{x'}
+ \frac{1}{h} \bv{\lambda} \jump{\vU_h} \int_0^h \int_\sigma  - \xi \partial_s{\boldsymbol{\varphi}} (\tilde x_s, x') {\rm d} \xi {\rm d}S_{x'} \dt\right|\\
&& \leq
 \frac{2}{h} \int_0^T \Big |  \bv{\lambda} \jump{\vU_h} \int_0^h \int_\sigma \xi \partial_s{\boldsymbol{\varphi}} (\tilde x_s, x') {\rm d} \xi {\rm d} S_{x'} \Big | \dt\\
&&  \lesssim h^N \int_0^T \bv{\lambda} \Big| \jump{\vU_h} \Big| dt \, \| \boldsymbol{\varphi}\|_{C^1([0,T]\times\Omega)} \rightarrow 0 \qquad \mbox{ for }\  h \to 0.
\end{eqnarray*}
The last convergence follows from the weak BV property (\ref{weakBV1}) and  implies the consistency of the numerical diffusion term (\ref{num_dif}).

\begin{Remark}[weak BV (\ref{weakBV1}) holds for the finite volume schemes (\ref{scheme_bE1}), (\ref{scheme_cE1})]\label{R:weakBV}\
\\
In what follows we show that the finite volume schemes (\ref{scheme_bE1}) and (\ref{scheme_cE1})
with the local numerical diffusion satisfy the weak BV estimate (\ref{weakBV1}).
To unify the argumentation  we set in this remark $\eta := - \vr\mathcal{S}_{\chi}$  for the complete Euler equations in order to work with the  convex entropy for both barotropic and complete Euler systems.
Let us assume  that
 \begin{itemize}
 \item no vacuum appears, i.e.~
 \begin{equation}\label{positive_rho}
 \exists \, \Un{\vr} > 0: \vr_h(t) \geq  \Un{\vr}
 \end{equation}
 \item entropy Hessian ist strictly positive definite, i.e.~
 \begin{equation}\label{positive_e}
 \exists \, \Un{\eta} > 0:  \frac{d^2\eta(\vU)}{d\vU^2} \geq \Un{\eta} \mathbb{I}, \quad \mathbb{I} \mbox{ is a unit matrix}.
 \end{equation}
 \end{itemize}
The entropy residual {\cg $ r_\sigma$} arising in the discrete entropy inequality, that is obtained by multiplying the conservation law (\ref{conservative_system}) by
$\nabla_{\vU} \eta(\vU)$, reads, see, e.g., \cite{Tad03}, \cite{fjdiss},
\begin{equation*}
r_{\sigma} = - \delta_\sigma \, \jump{\vU} \, \jump{\vV}.
\end{equation*}
Here $\delta_\sigma > \bv{\lambda} / 2 > 0.$ For the Euler equations it holds that $\bv{\lambda} = \max(|\vu_K | + c_K, |\vu_L | + c_L)$,
$\sigma = K|L$. Furthermore,  we have for the barotropic and the complete Euler equations
$c = \sqrt{\gamma \vr^{\gamma-1 }}$ and $c = \sqrt{\gamma p/ \vr}$, respectively.

It follows from the construction of the entropy stable schemes that the entropy residual is negative, see \cite{Tad87,Tad03,fjdiss}. Consequently,
integrating the discrete entropy inequality over $\Omega$ and over time interval $(0,T)$ yields
$$
\int_{\Omega} \eta(\vU_h(T)) \dx - \int_0^T \sum_{\sigma \in \mathcal{E}} h^{N-1} r_\sigma \, \dt \leq \int_{\Omega} \eta(\vU_h(0)) \dx
\leq \mbox{const.}
$$
{\cg Furthermore, it holds that $\eta(\vU_h(t)) \geq \tilde \eta, $  \ $t\in (0,T)$. Indeed, for the barotropic Euler system this bound holds  due to (\ref{positive_rho}) and thus (\ref{eq1}) follows. For the complete Euler system it holds for
any $\eta = -\vr S_\chi$  since $\chi$ is bounded from above and (\ref{positive_rho}) holds. Thus, passing to the limit with $\chi(Z) \to Z $
in the entropy inequality we obtain finally}
\begin{equation}\label{eq1}
- \int_0^T \sum_{\sigma \in \mathcal{E}} h^{N} r_\sigma  \, \dt \to 0   \qquad \mbox{for }\  h \to 0.
\end{equation}
Assumption (\ref{positive_e}) and the mean value theorem imply
\begin{equation*}
\jump{\vU} = \vU'(\tilde\vV) \jump{\vV} = \left(\frac{d^2(\eta(\tilde\vU))}{d\vU^2}\right)^{-1} \jump{\vV}
\end{equation*}
and thus
\begin{equation*} 
\Un{\eta}\jump{\vU_h} \leq \jump{\vV_h}.
\end{equation*}
Consequently, we have
\begin{equation} \label{eq3}
\frac{\Un{\eta}}{2} \int_0^T \sum_{\sigma \in \mathcal{E}} h^{N} \bv{\lambda} \jump{\vU}^2 \,\dt \leq
\int_0^T \sum_{\sigma \in \mathcal{E}} h^{N} \delta_\sigma \jump{\vU} \jump{\vV} \, \dt,
\end{equation}
where the last term tends to 0 for $h \to 0$ according to (\ref{eq1}).
It remains to show that the weak BV estimate (\ref{weakBV1}) holds. Indeed,
\begin{equation}\label{eq4}
\int_0^T \sum_{\sigma \in \mathcal{E}} h^{N} \bv{\lambda} |\jump{\vU}| \dt \leq
\left( \int _0^T \sum_{\sigma \in \mathcal{E}} h^{N} \bv{\lambda} \dt \right)^{1/2}
\left( \int_0^T \sum_{\sigma \in \mathcal{E}} h^{N} \bv{\lambda} |\jump{\vU}|^2 \dt \right)^{1/2}.
\end{equation}
The second term on the RHS of (\ref{eq4}) tends to 0 due to (\ref{eq3}) and (\ref{eq1}). To show the boundedness of the first term we apply the discrete trace inequality that holds for arbitrary piecewise constant function $f_h$, cf., e.g., \cite{FL17}
$$
\| f_h \|_{L^p(\partial K)} \leq h^{-1/p} \| f_h \|_{L^p(K)}, \qquad 1 \leq p \leq \infty.
$$
Thus,
\begin{eqnarray*}
&&\int _0^T \sum_{\sigma \in \mathcal{E}_{in}} h^{N} \bv{\lambda}  \dt \leq h\int_0^T \sum_{K \in \grid}
\sum_{\sigma \in \partial K} \int_\sigma \bv{\lambda} \rm{d}S \dt
\\
&& \lesssim h  \int _0^T \sum_{K \in \grid} \frac{1}{h}  \int_K | \lambda(U_K) | \dx \dt \leq \mbox{const.}, \quad  \lambda(U_K) = |\vu_K| + c_K.
\end{eqnarray*}
The last inequality follows from the assumption (\ref{positive_rho}) and from  a priori estimates (\ref{apriori_bE}) and (\ref{apriori_cE})
for the barotropic and the complete Euler equations, respectively.
In  conclusion, the weak BV estimate (\ref{weakBV1}) holds for the finite volume schemes (\ref{scheme_bE1}) and (\ref{scheme_cE1}) provided
there is no vacuum and the entropy Hessian is strictly positive definite for barotropic and strictly negative definite for the complete Euler equations, respectively.
\end{Remark}

\subsection{Consistency formulation of the entropy inequality for the complete Euler system}

For the complete Euler system we shall also derive a suitable consistency formulation of the  discrete  entropy inequality \eqref{dis_en_ineq} for
\begin{align}\label{entropy_cE}
\eta(\vU_h)=\vr_h\chi\left(\frac{1}{\gamma-1}\log\left((\gamma - 1) \frac{E_h - \frac{1}{2} \frac{|\vm_h|^2}{\vr_h} }{\vr_h^\gamma}\right)\right).
\end{align}
 Due to a priori estimates \eqref{apriori_cE}, Lemma~\ref{LemDT} and assumptions \eqref{S_chi} on $\chi$ we know that $\eta(\vU_h) \in L^{\infty}(0,T;L^{\gamma}(\Omega)).$ By the same token we know that
 \begin{align}\label{entropy_flux_cE}
 \bfq(\vU_h)=\vm_h \chi\left(\frac{1}{\gamma-1}\log\left((\gamma - 1) \frac{E_h - \frac{1}{2} \frac{|\vm_h|^2}{\vr_h} }{\vr_h^\gamma}\right)\right) \in L^{\infty}(0,T;L^{r}(\Omega)), \ r=\frac{2\gamma}{\gamma+1}.
 \end{align}
In what follows we assume
that the numerical entropy flux $\bfQ_h$ is \emph{globally Lipschitz-continuous}, i.e., there exists a $\tilde{C}>0$ such that for any $\sigma=K|L$ it holds that
\begin{align}\label{globLC}
\|\bv{\bfQ}(t)-\bfq(\vU_K(t))\|\equiv\|\bfQ_h(\vU_K(t),\vU_L(t))-\bfq(\vU_K(t))\|\leq \tilde{C}\|\vU_K(t)-\vU_L(t)\|,\quad L=K+h\bfe_s.
\end{align}
To derive the consistency formulation of the discrete renormalized entropy inequality we multiple  \eqref{dis_en_ineq} by $h^N\dv{(\Pi_h\varphi(t))},$ for any $\varphi \in C^{2}([0,T)\times\Omega),$ $\varphi \geq 0,$ and integrate in time to get:
\begin{itemize}
\item[] {\bf Time derivative:}
\begin{align*}
&h^N\intOT{\sum_{K\in\grid} \Dt{\eta(\dv{\vU}(t))}\dv{(\Pi_h\varphi(t))}}& \\
&=\left[\intO{\eta(\dv{\vU}(\tau))\varphi(\tau,\cdot)}\right]_{\tau=0}^{\tau=T}- \intOT{\intO{\eta(\dv{\vU}(t)) \pd{\varphi(t,x)}{t} }}.
\end{align*}

\item[] {\bf Convective term:} discrete integration by parts yields
\begin{align*}
&h^N\intOT{\sum_{K\in\grid}\dih{\bfQ_h(t)}\dv{(\Pi_h\varphi(t))}}& \\
& = -h^N\intOT{ \sum_{s=1}^N\sum_{\sigma\in\edge} \bv{Q}^s(t)\derb{(\Pi_h\varphi(t))}} = & \\
&=-h^N\intOT{ \sum_{s=1}^N \sum_{\sigma\in\edge}\big(\bv{Q}(t)-q^s(\dv{\vU}(t))\big)\derp{(\Pi_h\varphi(t))}}\ -& \\
&-\intOT{\intO{  \bfq(\dv{\vU}(t))\cdot\nabla_x{\varphi}(t,x)}}+R,
\end{align*}
where the last two terms with
\begin{align*}
R\lesssim h\n{C(0,T)}{\nabla_x\varphi(\hat{x})}\n{L^{\infty}(L^{r})}{\bfq(\vU_h)}
\end{align*}
appeared as a result of the identity
\begin{align*}
&h^N\derp{(\Pi_h\varphi(t))} = \intOhi{\frac{\varphi(t,x+h\bfe_s) -\varphi(t,x)}{h}}=\intOhi{\nabla_x\varphi(t,x)-\frac{h}{2}\DDt{2}{\varphi(\hat{x})}{x}}.
\end{align*}
What remains is to show that
\begin{align*}
-h^N\intOT{ \sum_{s=1}^N\sum_{\sigma\in\edge} \big(\bv{Q}(t)-q^s(\dv{\vU}(t))\big)\derp{(\Pi_h\varphi(t))}}
 =\mathcal{O}(h).
\end{align*}
Due to the  global Lipschitz continuity of $\bfQ_h,$ cf. \eqref{globLC}, we get     the following inequality
\begin{equation}\label{ineq}
\begin{aligned}
&-h^N\intOT{ \sum_{s=1}^N\sum_{\sigma\in\edge} \big(\bv{Q}(t)-q^s(\dv{\vU}(t))\big)\derp{(\Pi_h\varphi(t))}} & \\
&\leq C_L h^N \intOT{ \sum_{K\in\grid}\left[ \big\|\vU_K(t)-\vU_L(t)\big\|\sum_{s=1}^N\big|\derp{(\Pi_h\varphi(t))}\big|\right]}& \\
& \leq C_L  \left(h^N\intOT{\sum_{K\in\grid} \big\|\vU_K(t)-\vU_L(t)\big\|^2}\right)^{1/2}\left( \intOT{\intO{\sum_{s=1}^N\left|\DDt{}{\varphi}{x_s}(x)-\frac{h}{2}\DDt{2}{\varphi(\tilde{x})}{x_s}\right|^{2}}}\right)^{1/2}\\
&\lesssim C_L  \left(\intOT{\sum_{\sigma\in\edge} \left|\jump{\vU_h(t)}\right|^2 h^N}\right)^{1/2} \Bigg\{\|\nabla_x\varphi\|_{\infty}+h\left\|\DDt{2}{\varphi(\tilde{x})}{x}\right\|_{C(0,T)}\Bigg\}.
\end{aligned}
\end{equation}
To show that the first term  in \eqref{ineq} goes to zero, we follow analogous arguments as in Remark~\ref{R:weakBV}.
{\cg We assume strict positivity of the density \eqref{positive_rho}. Furthermore, for  physical entropy we assume
its uniform concavity, i.e.~strict positive definiteness of the Hessian for mathematical entropy, cf.~\eqref{positive_e}.
Applying Lemma~\ref{LemDT}}
we obtain from the control of the entropy residual $\bv{r}$ that there exists $\Un{\lambda} > 0$, such that
\begin{equation*}
\frac{\Un{\lambda}\Un{\eta}}{2} \int_0^T  \sum_{\sigma \in \mathcal{E}} h^{N}  \jump{\vU_h(t)}^2 \,\dt \leq
\int_0^T \sum_{\sigma \in \mathcal{E}} h^{N} \delta_\sigma \jump{\vU_h(t)} \jump{\vV_h(t)} \, \dt \rightarrow 0.
\end{equation*}
Finally, we have shown
\begin{align*}
&-h^N\intOT{ \sum_{s=1}^N\sum_{\sigma\in\edge} \big(\bv{Q}(t)-q^s(\dv{\vU}(t))\big)\derp{(\Pi_h\varphi(t))}}  \to 0 \mbox { as } h \to 0.
\end{align*}
\end{itemize}

\noindent Let us summarize the consistency results derived in this section.

\subsubsection*{Consistency formulation for the barotropic Euler system}

The consistency formulation of the numerical schemes \eqref{scheme_bE1} and \eqref{scheme_bE} for the barotropic Euler equations reads
\begin{equation}\label{consistency_bE}
\begin{aligned}
-\intO{\vr_h(0)\varphi(0,\cdot)}&= \intOT{\intO{\vr_h \pd{\varphi}{t} + \vm_h\cdot\grad{\varphi}}}+ \mathcal{O}(h)  \\
&\mbox { for any } \varphi \in C^3_c([0,T)\times\Omega); \\
-\intO{\vm_h(0)\cdot\boldsymbol{\varphi}(0,\cdot)}&= \intOT{\intO{\vm_h\cdot \pd{\boldsymbol{\varphi}}{t} }}+ &\\
&+\intOT{\intO{\bigg(\frac{\vm_h\otimes\vm_h}{\vr_h}+p_h\mathbb{I}\bigg)\cdot\grad\boldsymbol{\varphi} }}+ \mathcal{O}(h)\\
&\mbox { for any }  \boldsymbol{\varphi}  \in C^3_c([0,T)\times\Omega;R^N);\\
\left[\intO{\eta(\vU_h(t))}\right]_{t=0}^{t=\tau}&\leq 0,\ \mbox{ for a.a. } 0 \leq \tau \leq T \mbox{ with } \eta(\vU_h)=\frac{1}{2}\frac{|\vm_h|^2}{\vr_h}-P(\vr_h).
\end{aligned}
\end{equation}

\begin{Lemma}\label{L:AbE}
Let us assume that
\begin{itemize}
\item[(A1)] no vacuum appears, i.e., there exists $\Un{\vr} >0 $, such that $ \vr_h(t) \geq \Un{\vr}$, cf.~(\ref{pHYP})
\item[(A2)] {\cg if $1 < \gamma < 3$ then there exists $\overline{\vr} >0 $, such that $ \vr_h(t) \leq \overline{\vr}$.}
\end{itemize}
Then the local Lax-Friedrichs scheme (\ref{scheme_bE1}) is consistent with the barotropic Euler equations (\ref{M1}) and the consistency
formulation (\ref{consistency_bE}) holds. If we assume that
\begin{itemize}
{\cg \item[(A1)] no vacuum appears,  i.e., there exists $\Un{\vr} >0 $, such that $ \vr_h(t) \geq \Un{\vr}$}, cf.~(\ref{pHYP})
\item[(A3)] finite speed of propagation holds, i.e., there  exists $\Ov{\lambda} >0$, such that $\lambda(\vU_h(t)) \leq \Ov{\lambda}$ uniformly for $t \in [0,T]$ and $h\rightarrow 0,$
\end{itemize}
then the global Lax-Friedrichs scheme (\ref{scheme_bE}) is consistent with the barotropic Euler equations (\ref{M1}) and the consistency
formulation (\ref{consistency_bE}) holds.
\end{Lemma}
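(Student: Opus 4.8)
The plan is to verify each of the three ingredients of the consistency formulation \eqref{consistency_bE}. The contributions of the time derivative and of the convective terms, together with their $\mathcal{O}(h)$ remainders $r_1$, $r_2$, have already been computed above and are controlled by $\|\vm_h\|_{L^\infty(L^1)}$, $\|\sqrt{\vr_h}\vu_h\|_{L^\infty(L^2)}$ and $\|p_h\|_{L^\infty(L^1)}$. Under (A1) the a priori estimates \eqref{apriori_bE} hold, and since $\Omega$ is bounded and $r=2\gamma/(1+\gamma)>1$, the embedding $L^r(\Omega)\hookrightarrow L^1(\Omega)$ gives $\vm_h\in L^\infty(0,T;L^1(\Omega))$; hence all these norms are finite and $r_1,r_2\to 0$. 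The energy inequality in \eqref{consistency_bE} follows directly by summing the discrete energy inequality \eqref{dis_en_bE} over $K\in\grid$, using the conservativity of the numerical entropy flux, and integrating in time. Thus the whole burden reduces to the numerical diffusion term, which must be treated differently for the two fluxes.

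For the global scheme \eqref{scheme_bE} the diffusion contribution is bounded, up to derivatives of the test function, by $h\,\|\vU_h\|_{L^\infty(L^1)}\int_0^T\lambda\dt$. Under (A3) we have $\lambda\leq\Ov{\lambda}$, so $\int_0^T\lambda\dt\leq\Ov{\lambda}T$ and the whole term is $\mathcal{O}(h)$; combined with $\vr_h,\vm_h\in L^\infty(L^1)$ from \eqref{apriori_bE}, this shows the diffusion term vanishes as $h\to 0$, so the global scheme is consistent under (A1) and (A3).

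For the local scheme \eqref{scheme_bE1} I would first reduce the diffusion term \eqref{num_dif} by the Taylor expansion carried out before Remark~\ref{R:weakBV}, so that its convergence to zero is implied by the weak BV property \eqref{weakBV1}. It then remains to check the hypotheses of Remark~\ref{R:weakBV}. The no-vacuum condition \eqref{positive_rho} is precisely (A1); the delicate point -- and the main obstacle -- is the strict positive-definiteness \eqref{positive_e} of the Hessian of the barotropic entropy \eqref{entropy_bE}, $\eta=\tfrac12|\vm|^2/\vr+P(\vr)$. Writing $\vu=\vm/\vr$, the associated quadratic form in a test vector $(\zeta,\bfq)$ decomposes as
\begin{equation*}
(\zeta,\bfq)\cdot\frac{d^2\eta}{d\vU^2}\,(\zeta,\bfq)^T=P''(\vr)\,\zeta^2+\frac{1}{\vr}\,\big|\bfq-\vu\,\zeta\big|^2 ,
\end{equation*}
so the $N-1$ eigenvalues transverse to $\vu$ equal $1/\vr$ and are controlled from below by (A1)--(A2), while the essential $2\times2$ block (in the $\zeta$ and the $\vu$-parallel directions) has determinant $P''(\vr)/\vr=a\gamma\,\vr^{\gamma-3}$. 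For $1<\gamma<3$ this is a negative power of $\vr$, and it is exactly the upper bound (A2), $\vr_h\leq\Ov{\vr}$, that keeps it bounded away from zero; for $\gamma\geq 3$ the lower bound (A1) alone suffices, which is why (A2) is imposed only in the range $1<\gamma<3$. The complementary upper control needed to turn this into a uniform bound $d^2\eta/d\vU^2\geq\Un{\eta}\,\mathbb{I}$ rests on the boundedness of the discrete velocity furnished by the kinetic-energy estimate in \eqref{apriori_bE}, and is the most delicate part of the argument. Once \eqref{positive_e} is secured, Remark~\ref{R:weakBV} yields \eqref{weakBV1}, the diffusion term vanishes, and the local scheme is consistent under (A1) and (A2).
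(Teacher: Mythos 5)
Your argument follows the paper's own route step for step: the time--derivative, convective and energy contributions are taken over from the consistency section, the global scheme's diffusion term is disposed of by (A3) together with $\vU_h\in L^\infty(0,T;L^1)$, and the local scheme is reduced, via the Taylor expansion preceding Remark~\ref{R:weakBV} and the weak BV property, to verifying strict positive definiteness of the Hessian of $\eta=\tfrac12 |\vm|^2/\vr+P(\vr)$. Your completion--of--the--square identity for the quadratic form is a clean equivalent of the paper's determinant/trace computation and isolates exactly the same quantity $P''(\vr)/\vr=a\gamma\,\vr^{\gamma-3}$ that dictates when (A2) is needed, so the core of the proof matches. The one step I would not let stand is your closing sentence: the kinetic--energy estimate in \eqref{apriori_bE} gives only $\sqrt{\vr_h}\vu_h\in L^\infty(0,T;L^2(\Omega))$, which furnishes no pointwise bound on $\vu_h$ and therefore cannot supply the upper control on the largest eigenvalue of the Hessian (nor, in the regime $\gamma\geq 3$ where only (A1) is assumed, a lower bound on the transverse eigenvalues $1/\vr$) that a uniform inequality $d^2\eta/d\vU^2\geq\Un{\eta}\,\mathbb{I}$ requires. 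To be fair, the paper's own proof does not perform this step either --- it records the determinant and the trace and concludes --- so you have correctly put your finger on the delicate point, but the justification you attach to it is a non sequitur; the honest statement is that pointwise upper bounds on $\vr_h$ and $|\vm_h|$ of the kind assumed elsewhere (cf.\ Corollary~\ref{Cor:bE}) are what would actually close this, and you should either invoke such bounds explicitly or drop the claim.
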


{\cg
\begin{proof}
The only  point to verify is to show that  (A1) and (A2) imply strict positive definiteness of the entropy Hessian. Indeed, we have for the barotropic Euler systems that
\begin{equation*}
\frac{d^2 \eta(\vU)}{d\vU^2} = \left( \begin{array}{c c}  a \gamma \vr^{\gamma-2} + \frac{| \vm |^2}{\vr^3} & -\frac{| \vm |}{\vr^2} \\
                                                             -\frac{| \vm |}{\vr^2} & \frac{1}{\vr}
                                                             \end{array}
                                                             \right).
\end{equation*}
Direct calculation yields the determinant and the trace of entropy Hessian, i.e. $\displaystyle \mbox{det} = a \gamma \vr^{\gamma -3}$ and
$\displaystyle \mbox{tr} = a \gamma \vr^{\gamma-2} + \frac{|\vm|^2}{\vr^3} + \frac{1}{\vr},$ respectively.  Consequently, for $\gamma \geq 3$
the Hessian is uniformly strictly positive if (A1) holds, for $1 < \gamma < 3 $ we need to require (A1) and (A2).
\end{proof}
}

\subsubsection*{Consistency formulation for the complete Euler system}

The consistency formulation of the numerical schemes \eqref{scheme_cE1} and \eqref{scheme_cE} for the complete Euler equations reads
\begin{equation}\label{consistency_cE}
\begin{aligned}
-\intO{\vr_h(0)\varphi(0,\cdot)}&= \intOT{\intO{\vr_h(t) \pd{\varphi(t,x)}{t} + \vm_h(t)\cdot\grad{\varphi(t,x)}}}+ \mathcal{O}(h) \\
&\mbox { for any } \varphi \in C^3_c([0,T)\times\Omega); \\
-\intO{\vm_h(0)\cdot\boldsymbol{\varphi}(0,\cdot)}&= \intOT{\intO{\vm_h(t)\cdot \pd{\boldsymbol{\varphi}(t,x)}{t} }}+ &\\
&+\intOT{\intO{\bigg(\frac{\vm_h(t)\otimes\vm_h(t)}{\vr_h(t)}+p_h(t)\mathbb{I}\bigg)\cdot\grad\boldsymbol{\varphi}(t,x) }}+ \mathcal{O}(h) \\
&\mbox { for any }  \boldsymbol{\varphi}  \in C^3_c([0,T)\times\Omega;R^N);\\
\left[\intO{E_h(t)}\right]_{t=0}^{t=\tau}&=0, \ \mbox{ for a.a. } 0 \leq \tau \leq T ;\\
-\intO{\eta(\vU_h(0))\varphi(0,\cdot)}&\geq \intOT{\intO{\eta(\vU_h(t))\cdot \pd{\varphi(t,x)}{t} + \bfq_h(t)\cdot\grad\varphi(t,x) }}+ \mathcal{O}(h),  \\
&\mbox{ with } \ \eta(\vU_h) =\vr_h\chi\left( \log \left(
\frac{(\gamma-1)}{\vr_h^{\gamma}}\left(E_h-\frac{1}{2}\frac{|\vm_h|^2}{\vr_h}\right)
\right)\right)\\
\mbox { for any }  \varphi \in C^3_c([0,T)\times\Omega), \, \varphi \geq 0, &\mbox{ and any } \chi \mbox{ defined on R, increasing, concave, } \chi(Z) \leq \Ov{\chi} \mbox{ for all } Z.
\end{aligned}
\end{equation}

\begin{Lemma}\label{L:AcE}
Let us assume that
\begin{itemize}
\item[(A1)] no vacuum appears, i.e., there exists $\Un{\vr} >0 $, such that $ \vr_h(t) \geq \Un{\vr},$ cf.~(\ref{pHYP})
\item[(A2)] entropy Hessian is strictly {\cg negative definite, i.e., there exists $\Un{\eta} > 0$, such that $\displaystyle \frac{d^2\eta(\vU)}{d\vU^2}  \leq - \Un{\eta}\mathbb I$}
\item[(A3)] numerical entropy flux $\bfQ_h$ is globally Lipschitz continuous, cf. \eqref{globLC}.
\end{itemize}
Then the local Lax-Friedrichs scheme (\ref{scheme_cE1}) is consistent with the complete Euler system (\ref{F5}) and the consistency
formulation (\ref{consistency_cE}) holds. If we assume that
\begin{itemize}
{\cg \item[(A1)] no vacuum appears, i.e., there exists $\Un{\vr} >0 $, such that $ \vr_h(t) \geq \Un{\vr}$, cf.~(\ref{pHYP})}
\item[(A2)] entropy Hessian is strictly {\cg negative definite, i.e., there exists $\Un{\eta}>0$, such that $\displaystyle \frac{d^2\eta(\vU)}{d\vU^2}  \leq - \Un{\eta}\mathbb I$}
\item[(A3)] numerical entropy flux $\bfQ_h$ is globally Lipschitz continuous, cf. \eqref{globLC}
\item[(A4)] finite speed of propagation holds, i.e., there  exists $\Ov{\lambda} >0$ such that $\lambda(\vU_h(t)) \leq \Ov{\lambda}$ uniformly for $t \in [0,T]$ and $h\rightarrow 0,$
\end{itemize}
then the global Lax-Friedrichs scheme (\ref{scheme_cE}) is consistent with the complete Euler system (\ref{F5}) and the consistency
formulation (\ref{consistency_cE}) holds.
\end{Lemma}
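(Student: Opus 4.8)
The plan is to assemble the four blocks of the consistency formulation \eqref{consistency_cE} from the term-by-term estimates already carried out in this section, and to verify that assumptions (A1)--(A4) are exactly what is needed to send every remainder to zero as $h\to 0$. The energy identity (the third line of \eqref{consistency_cE}) is immediate: it is nothing but the conservation of total energy \eqref{coonservation_cE} obtained by summing \eqref{en_cE1} (resp.\ \eqref{en_cE}) against $h^N$ over $K\in\grid$. For the continuity and momentum equations, the time-derivative contributions reproduce the boundary terms exactly after integration by parts, while the convective contributions produce the weak-form fluxes plus the remainders $r_1$, $r_2=\mathcal{O}(h)$ estimated above; here I would invoke the a priori bounds of Lemma~\ref{L} to guarantee that $\vm_h$, $\tfrac{\vm_h\otimes\vm_h}{\vr_h}$ and $p_h$ lie in the spaces $L^\infty(L^1)$, $L^\infty(L^2)$ that control those remainders.

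The only terms that discriminate between the local scheme \eqref{scheme_cE1} and the global scheme \eqref{scheme_cE} are the numerical-diffusion terms. For the global scheme I would use the finite-speed-of-propagation hypothesis (A4): since $\lambda\le\Ov{\lambda}$, the diffusion term is bounded by $hT\,\|\Delta_x\bfphi\|_\infty\,\|\vU_h\|_{L^\infty(L^1)}\int_0^T\lambda\,\dt=\mathcal{O}(h)$, exactly as in the global computation above. For the local scheme I would instead appeal to the face-by-face Taylor cancellation \eqref{sum_jump}, whose remainder is controlled by $h^N\int_0^T\bv{\lambda}\,|\jump{\vU_h}|\,\dt$; this tends to zero precisely by the weak BV property \eqref{weakBV1}. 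By Remark~\ref{R:weakBV}, \eqref{weakBV1} holds under the no-vacuum condition (A1) together with strict definiteness of the entropy Hessian, which for the physical entropy $\eta=\vr\mathcal{S}_\chi$ is the strict \emph{negative} definiteness demanded in (A2).

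The entropy inequality (the last block of \eqref{consistency_cE}) is the delicate part. Its time-derivative term again yields the boundary contribution after integration by parts, and the a priori estimates \eqref{apriori_cE}, Lemma~\ref{LemDT} and the bound $\chi\le\Ov{\chi}$ place $\eta(\vU_h)$ and $\bfq(\vU_h)$ into $L^\infty(L^\gamma)$ and $L^\infty(L^r)$ through \eqref{entropy_cE}, \eqref{entropy_flux_cE}. After discrete integration by parts the convective term splits into the desired weak-form flux, the consistency error $R=\mathcal{O}(h)$, and the face-difference sum involving $\bv{\bfQ}-\bfq(\vU_K)$. The global Lipschitz continuity (A3) is essential here: it converts that sum into $C_L\big(\int_0^T\sum_\sigma|\jump{\vU_h}|^2 h^N\big)^{1/2}\{\|\nabla_x\varphi\|_\infty+\mathcal{O}(h)\}$ as in \eqref{ineq}. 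The main obstacle is therefore the \emph{quadratic} weak-BV estimate $\int_0^T\sum_\sigma|\jump{\vU_h}|^2 h^N\to 0$. I would obtain it as in Remark~\ref{R:weakBV}: integrating the discrete entropy inequality \eqref{dis_en_ineq} controls the residual $r_\sigma=-\delta_\sigma\jump{\vU}\jump{\vV}$ with $\delta_\sigma>\bv{\lambda}/2$, and combining this with the Hessian bound $\Un{\eta}\,|\jump{\vU_h}|\le|\jump{\vV_h}|$ (from (A2) and the mean value theorem) together with a uniform lower bound $\Un{\lambda}>0$ on the numerical viscosity gives $\tfrac{\Un{\lambda}\,\Un{\eta}}{2}\int_0^T\sum_\sigma h^N|\jump{\vU_h}|^2\,\dt\to 0$. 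This $\Un{\lambda}$ is legitimate because $\bv{\lambda}\ge c=\sqrt{\gamma\vt_h}\ge\sqrt{\gamma\,\Un{\vt}}$, where $\Un{\vt}>0$ is furnished by Lemma~\ref{L}; this is precisely where positivity of the pressure (Lemma~\ref{LemDT}) and the no-vacuum hypothesis (A1) jointly enter. Collecting the four blocks then establishes \eqref{consistency_cE} and hence the consistency of both the local and the global Lax-Friedrichs schemes.
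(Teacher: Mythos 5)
Your proposal is correct and follows essentially the same route as the paper, which proves this lemma not in a separate proof environment but by assembling the term-by-term consistency estimates of Section~6 (time derivative, convective terms, numerical diffusion for the global coefficient under (A4) and for the local coefficient via the face-by-face Taylor cancellation plus weak BV) together with the entropy-residual argument of Remark~\ref{R:weakBV} and the Lipschitz bound (A3) in \eqref{ineq}. One minor point: to obtain the uniform lower bound $\Un{\vt}>0$ (hence $\Un{\lambda}>0$) you invoke Lemma~\ref{L}, whose hypotheses include the energy bound $E_h\leq\Ov{E}$ that is not among (A1)--(A4); the bound actually follows from Lemma~\ref{LemDT} and (A1) alone, since $\vr_h\leq\Ov{C}\,\vt_h^{1/(\gamma-1)}$ gives $\vt_h\geq(\Un{\vr}/\Ov{C})^{\gamma-1}>0$, which is the citation the paper uses at that step.
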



Recalling Lemmas~\ref{L}, \ref{L:AbE} and \ref{L:AcE}  we derive the following results.

\begin{Corollary}\label{Cor:bE}
Let $\vU_h=[\vr_h,\vm_h]$ be a numerical solution of the barotropic Euler system constructed by the global Lax-Friedrichs scheme \eqref{scheme_bE}. Suppose that there exist positive constants $\Un{\vr},$ $\Ov{\vr},$ $\Ov{\vm}>0$ such that
\begin{align*}
0 < \Un{\vr} \leq \vr_h \leq \Ov{\vr}, \ |\vm_h| \leq \Ov{\vm}, \mbox{ uniformly for} \ h  \rightarrow
 0.
\end{align*}
Then the assumptions (A1), (A3) of Lemma~\ref{L:AbE} are satisfied.

\medskip
\noindent Let $\vU_h=[\vr_h,\vm_h]$ be a numerical solution of the local Lax-Friedrichs scheme \eqref{scheme_bE1}.
For $\gamma \geq 3$ we suppose that  there exists constant $\Un{\vr},$  such that
\begin{align*}
0 < \Un{\vr} \leq \vr_h  \  \mbox{ uniformly for} \ h  \rightarrow 0,
\end{align*}
for $1 < \gamma < 3$ we suppose that there exist constants $\Un{\vr},$ $\Ov{\vr},$  such that
\begin{align*}
0 < \Un{\vr} \leq \vr_h \leq \Ov{\vr}, \  \mbox{ uniformly for} \ h  \rightarrow 0.
\end{align*}
Then the assumptions (A1), (A2) of Lemma~\ref{L:AbE} are satisfied.
Consequently, the global and the local Lax-Friedrichs schemes for the barotropic Euler equations satisfy the consistency formulation \eqref{consistency_bE}.
\end{Corollary}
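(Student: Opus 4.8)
The plan is to prove the corollary by reducing it entirely to Lemma~\ref{L:AbE}: the uniform bounds postulated here are shown to furnish precisely the hypotheses (A1)--(A3) needed there, after which the consistency formulation \eqref{consistency_bE} follows at once from that lemma. No new estimate on the schemes is required, only a verification of hypotheses.

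For the global Lax-Friedrichs scheme \eqref{scheme_bE}, assumption (A1) coincides with the postulated lower bound $\vr_h \geq \Un{\vr} > 0$, so only (A3), the finite speed of propagation, has to be produced. Here I would recall that the flux $\bff(\vU)=[\vm,\ \vm\otimes\vm/\vr + p\mathbb{I}]$ of the barotropic system, with $p = a\vr^{\gamma}$, has in each direction $\bfe_s$ the eigenvalues $u_s - c,\ u_s,\ u_s + c$, where $u_s = m_s/\vr$ and the sound speed is $c = \sqrt{p'(\vr)} = \sqrt{a\gamma}\,\vr^{(\gamma-1)/2}$. Consequently,
\[
\max_{s=1,\dots,N} |\lambda^s(\vU_h)| \leq |\vu_h| + c_h = \frac{|\vm_h|}{\vr_h} + \sqrt{a\gamma}\,\vr_h^{(\gamma-1)/2}
\leq \frac{\Ov{\vm}}{\Un{\vr}} + \sqrt{a\gamma}\,\Ov{\vr}^{(\gamma-1)/2},
\]
and the right-hand side is a finite constant, which we take as $\Ov{\lambda}$. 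The velocity is controlled by pairing $|\vm_h| \leq \Ov{\vm}$ with $\vr_h \geq \Un{\vr}$, while the sound speed is controlled by $\vr_h \leq \Ov{\vr}$. This establishes (A3), and Lemma~\ref{L:AbE} then applies.

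For the local Lax-Friedrichs scheme \eqref{scheme_bE1} I would argue according to the value of $\gamma$, reusing the Hessian computation already performed in the proof of Lemma~\ref{L:AbE}. Assumption (A1) is again the postulated density lower bound, while (A2) amounts to strict positive definiteness of the entropy Hessian, whose determinant equals $a\gamma\,\vr^{\gamma-3}$. When $\gamma \geq 3$ one has $\gamma - 3 \geq 0$, so $\vr^{\gamma-3} \geq \Un{\vr}^{\gamma-3}$ and the determinant is bounded below by the lower bound alone; hence the single hypothesis $\vr_h \geq \Un{\vr}$ suffices and (A2) is automatic. When $1 < \gamma < 3$ the exponent $\gamma - 3$ is negative and $\vr^{\gamma-3}$ degenerates as $\vr \to \infty$, so the determinant is bounded below by $a\gamma\,\Ov{\vr}^{\gamma-3}$ only after invoking the upper bound $\vr_h \leq \Ov{\vr}$, which is exactly the hypothesis imposed in that regime. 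In both cases (A1) and (A2) hold, so Lemma~\ref{L:AbE} again yields \eqref{consistency_bE}.

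I do not expect a genuine obstacle, since the corollary is essentially a hypothesis-matching statement and the only actual computation is the wave-speed bound for the global scheme. The one point deserving care is that this bound draws on the postulated inequalities in two different places—the velocity is tamed by combining $|\vm_h| \leq \Ov{\vm}$ with the density lower bound, whereas the sound speed is tamed by the density upper bound—so that all three bounds $\Un{\vr} \leq \vr_h \leq \Ov{\vr}$ and $|\vm_h| \leq \Ov{\vm}$ are genuinely used to secure (A3) for the global scheme, while for the local scheme only the determinant of the Hessian, and hence only the $\gamma$-dependent combination of density bounds, enters.
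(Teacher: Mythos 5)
Your proposal is correct and follows essentially the same route as the paper, which states the corollary without a separate proof as a direct hypothesis-matching application of Lemma~\ref{L:AbE}; your explicit wave-speed estimate $\max_s|\lambda^s(\vU_h)|\leq |\vm_h|/\vr_h+\sqrt{a\gamma}\,\vr_h^{(\gamma-1)/2}\leq \Ov{\vm}/\Un{\vr}+\sqrt{a\gamma}\,\Ov{\vr}^{(\gamma-1)/2}$ is exactly the (implicit) content needed for (A3). One cosmetic remark: in Lemma~\ref{L:AbE} the assumption (A2) is literally the density upper bound for $1<\gamma<3$ (so the matching is immediate), while the positive definiteness of the entropy Hessian via $\det = a\gamma\vr^{\gamma-3}$ that you re-derive is the content of the lemma's own proof rather than of its hypothesis.
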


\begin{Corollary}\label{Cor:cE}
 Let $\vU_h=[\vr_h,\vm_h,E_h]$ be a numerical solution of the complete Euler system constructed by the schemes \eqref{scheme_cE1} or \eqref{scheme_cE}. Suppose that there exist constants $\Un{\vr},$ $\Ov{E}>0$ such that
\begin{align*}
\Un{\vr} \leq \vr_h, \ E_h \leq \Ov{E}, \mbox{ uniformly for} \ h  \rightarrow
 0.
\end{align*}
Then the assumptions (A1)--(A5) of Lemma~\ref{L:AcE} are satisfied. In particular, the global and the local Lax-Friedrichs schemes for the complete Euler equations satisfy the consistency formulation \eqref{consistency_cE}.

\end{Corollary}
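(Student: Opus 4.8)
The plan is to reduce everything to Lemma~\ref{L}, which already converts the two structural hypotheses $\vr_h \geq \Un{\vr}$ and $E_h \leq \Ov{E}$ into full two--sided control of all the physical state variables, and then to read off each of the assumptions (A1)--(A4) of Lemma~\ref{L:AcE} from those bounds. First I would invoke Lemma~\ref{L} with the given constants $\Un{\vr}$ and $\Ov{E}$; it produces constants $\Ov{\vr}, \Un{\vt}, \Ov{\vt}, \Un{p}, \Ov{p}, \Ov{\vm}$ such that, uniformly in $h$ and $t\in[0,T]$,
\begin{align*}
0 < \Un{\vr} \leq \vr_h \leq \Ov{\vr}, \quad |\vm_h| \leq \Ov{\vm}, \quad 0 < \Un{\vt} \leq \vt_h \leq \Ov{\vt}, \quad 0 < \Un{p} \leq p_h \leq \Ov{p}.
\end{align*}
In other words, the numerical solution is confined to a fixed compact set $D \subset \mathcal{F}$ that stays a positive distance away from the vacuum $\vr = 0$ and from the degenerate set $p = 0$. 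This compactness is the engine behind all four verifications.

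Assumption (A1) is literally the hypothesis $\vr_h \geq \Un{\vr}$. For (A4) I would recall that the eigenvalues of the Euler flux Jacobian are $\vu\cdot\bfe_s$ and $\vu\cdot\bfe_s \pm c$ with sound speed $c = \sqrt{\gamma p/\vr}$; the bounds above give $|\vu_h| = |\vm_h|/\vr_h \leq \Ov{\vm}/\Un{\vr}$ and $c \leq \sqrt{\gamma \Ov{p}/\Un{\vr}}$, so that $\lambda(\vU_h) \leq \Ov{\lambda} := \Ov{\vm}/\Un{\vr} + \sqrt{\gamma\Ov{p}/\Un{\vr}}$ uniformly in $h$ and $t$. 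For (A3) I would note that the numerical entropy flux \eqref{def_Q} is assembled from the entropy variables $\vV$, the potential $\psi$ and the numerical flux $\bff$, each of which is a smooth function of $[\vr,\vm,E]$ on the interior of $\mathcal{F}$; restricted to the compact set $D\times D$ these are therefore Lipschitz, and combining this with the consistency $\bfQ_h(\vw,\vw) = \bfq(\vw)$ yields the estimate \eqref{globLC} with $\tilde{C}$ the Lipschitz constant of $\bfQ_h$ in its second argument over $D\times D$.

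The one point that deserves a little more care is (A2), the uniform strict definiteness of the entropy Hessian. Following the convention of Remark~\ref{R:weakBV}, I would work with the physical entropy obtained in the limit $\chi\to\mathrm{id}$, namely $\eta = \vr S$ with $S$ as in \eqref{S_cE}; by Remark~\ref{R4} this $\eta$ is strictly concave on $\{\vr > 0,\ \vt > 0\}$, so its Hessian is negative definite at every point of $D$. Since $D$ is compact and the entries of $\frac{d^2\eta(\vU)}{d\vU^2}$ depend continuously on $[\vr,\vm,E]$ there (no vacuum, $\vt$ bounded away from $0$ and from $\infty$), the largest eigenvalue is bounded away from zero, which gives $\frac{d^2\eta(\vU)}{d\vU^2} \leq -\Un{\eta}\mathbb{I}$ for a uniform $\Un{\eta}>0$.

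With (A1)--(A4) established, Lemma~\ref{L:AcE} applies verbatim to both the local scheme \eqref{scheme_cE1} and---using the finite--speed bound (A4)---the global scheme \eqref{scheme_cE}, and the consistency formulation \eqref{consistency_cE} follows. I expect the main obstacle to be exactly this uniform spectral gap in (A2): pointwise strict concavity is immediate, but the \emph{uniform} lower bound $\Un{\eta}$ is what makes the weak BV argument (and hence the consistency of the numerical diffusion) work, and it is available only because Lemma~\ref{L} has pushed the state into a compact, vacuum--free region where continuity plus compactness supply the missing uniformity.
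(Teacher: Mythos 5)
Your proposal is correct and follows exactly the route the paper intends: the paper gives no written proof of Corollary~\ref{Cor:cE} beyond the phrase ``Recalling Lemmas~\ref{L}, \ref{L:AbE} and \ref{L:AcE}\dots'', and your argument is precisely the fleshing-out of that reduction --- Lemma~\ref{L} confines the states to a compact, vacuum-free set, after which (A1) is the hypothesis, (A4) follows from the eigenvalue bound $|\vu_h|+c_h$, (A3) from Lipschitz continuity of $\bfQ_h$ on compacts via \eqref{def_Q}, and (A2) from strict concavity of the physical entropy plus compactness. (Note the ``(A1)--(A5)'' in the statement is a typo for (A1)--(A4), which you correctly treated.)
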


\section{Limit process}

Recall that for simplicity we assume that $\Omega= \left( [0,1]|_{\{ 0, 1\}} \right)^N$, $N=1,2,3$ is the flat torus, meaning we focus on spatially periodic solutions.
In addition, we prescribe regular initial data,
\begin{equation} \label{RiD}
\vr^0 \in C^1 (\Omega),\  \vr^0 > 0,\ \vc{m}^0 = C^1(\Omega; R^N),\ E^0 \in C^1(\Omega), \ p^0 = (\gamma - 1) \left( E^0 - \frac{1}{2} \frac{|\vc{m}^0|^2}{\vr^0} \right)
> 0.
\end{equation}
Under the perfect gas state equation, the last condition gives rise to the initial temperature,
\[
\vt^0 =  \frac{(\gamma - 1)}{\vr^0} \left( E^0 - \frac{1}{2} \frac{|\vc{m}^0|^2}{\vr^0} \right).
\]

\subsection{Generating measure--valued solutions}

\subsubsection{Equation of continuity, weak limit}

Let $\vr_h$, $\vc{m}_h$, and $E_h$ be a family of numerical solutions corresponding to the time step $h$.
The energy estimates \eqref{apriori_bE} and \eqref{apriori_cE} can be used to
deduce,  at least for suitable subsequences,
\[
\begin{split}
\vr_h &\to \vr \ \mbox{weakly-(*) in}\ L^\infty(0,T; L^{\gamma}(\Omega)),\ \vr \geq 0 \\
\vm_h  &\to \vm \ \  \mbox{weakly-(*) in}\ L^\infty(0,T; L^{r}(\Omega; R^N)), \ r=\frac{2\gamma}{\gamma+1}>1,
\end{split}
\]
for both the barotropic and the complete Euler systems.
In addition, it may be deduced from \eqref{consistency_bE} or \eqref{consistency_cE} that the limit functions satisfy the equation of continuity in the form
\begin{equation} \label{M10}
- \intO{ \vr^0 \varphi (0, \cdot) } = \int_0^T \intO{ \left[  \vr \partial_t \varphi + \vm \cdot \Grad \varphi \right] } \dt
\end{equation}
for any test function $\varphi \in C^1_c([0, T) \times \Omega )$. Clearly,
\[
\vr \in C_{\rm weak}([0,T]; \Omega)
\]
and (\ref{M10}) can be rewritten in the form
\begin{equation} \label{M11}
\left[ \intO{ \vr \varphi (t, \cdot) } \right]_{t = 0}^{t=\tau} = \int_0^\tau
\intO{ \left[  \vr \partial_t \varphi + \vm \cdot \Grad \varphi \right] } \dt
\end{equation}
for any $0 \leq \tau \leq T$ and any $\varphi \in C^1([0,T] \times \Omega)$.

\subsubsection{Young measure generated by numerical solutions}
\label{young}

The entropy inequality \eqref{dis_en_ineq}, along with the consistency formulations \eqref{consistency_bE} and \eqref{consistency_cE} provide a suitable platform for the use of the theory of dissipative
measure--valued solutions developed in \cite{FGSWW1}.
Consider the family of numerical solutions $[\vr_h, \vm_h, E_h]$ (complete Euler) or $[\vr_h, \vm_h]$ (barotropic Euler).
In accordance with the weak convergence statement
derived in the preceding part and boundedness of the total energy established in \eqref{coonservation_cE}, these families generate a Young measure - a parametrized measure
\[
\mathcal{V}_{t,x} \in L^\infty((0,T) \times \Omega; \mathcal{P}(\mathcal{F})) \ \mbox{for a.a.}\ (t,x) \in (0,T) \times \Omega,
\]
sitting on the phase space $\mathcal{F}$, where the latter is
\[
\mathcal{F} = \left\{ [\vr, \vc{m}] \in [0, \infty) \times R^N \right\}
\]
for the barotropic Euler system, and
\[
\mathcal{F} = \left\{ [\vr, \vc{m}, E] \ \Big|\  [0, \infty) \times R^N \times [0, \infty) \right\}
\]
for the complete Euler system. Recall that, in accordance with the fundamental theorem of the theory of Young measures (see e.g. Ball \cite{BALL2} or Pedregal \cite{PED1}),
we have
\[
\left< \mathcal{V}_{t,x}, g(\vc{U}) \right> = \Ov{g(\vc{U})}(t,x)\ \mbox{for a.a.}\ (t,x) \in (0,T) \times \Omega,
\]
whenever $g \in C_c(\mathcal{F})$, and
\[
g(\vc{U}_h) \to \Ov{g(\vc{U})} \ \mbox{weakly in}\ L^1((0,T) \times \Omega).
\]

\subsubsection{Continuity equation}

Accordingly, the equation of continuity \eqref{M11} can be written as
\begin{equation} \label{M12}
\left[ \intO{ \left< \mathcal{V}_{t, x} ; \vr \right> \varphi (t, \cdot) } \right]_{t= 0}^{t=\tau } = \int_0^\tau
\intO{ \left[ \left< \mathcal{V}_{t, x} ; \vr \right>   \partial_t \varphi +  \left< \mathcal{V}_{t, x} ; \vc{m} \right> \cdot \Grad \varphi \right] } \dt
\end{equation}
Note that there is no concentration measure in \eqref{M12}, i.e., $\mu_C^1=0.$

\subsubsection{Momentum equation}

We apply a similar treatment to the momentum equation \eqref{mom_bE} and \eqref{mom_cE}.
Using a priori bounds \eqref{apriori_bE} and \eqref{apriori_cE} we obtain that
\[
\frac{\vc{m}_h \otimes \vc{m}_h}{\vr_h} \ \mbox{is bounded in}\ L^\infty(0,T; L^1(\Omega; R^{N \times N})),
\]
and
\[
p_h \mbox{ is bounded in}\ L^\infty(0,T; L^1(\Omega)).
\]
Recall that the pressure is defined as
\[
p_h = \left\{ \begin{array}{l} a \vr_h^\gamma \ \mbox{in the barotropic case,}\\ \\
(\gamma - 1) \left( E_h - \frac{1}{2} \frac{|\vc{m}_h|^2}{\vr_h} \right) \ \mbox{for the complete system.} \end{array} \right.
\]
Thus, passing to subsequences as the case may be, we deduce
\[
\frac{\vc{m}_h \otimes \vc{m}_h}{\vr_h} + p_h \mathbb{I}  \to \Ov{ \frac{\vc{m}_h \otimes \vc{m}_h}{\vr_h} + p_h \mathbb{I} }
\ \mbox{weakly-(*) in}\ L^\infty(0,T; \mathcal{M}(\Omega; R^{N \times N} ) ).
\]
We set
\[
\mu^2_C \defeq \Ov{ \frac{\vm \otimes \vm}{\vr} + p \mathbb{I} } - \left< \mathcal{V}_{t,x}; \frac{\vm \otimes \vm}{\vr} + p \mathbb{I} \right>
\in L^\infty(0,T; \mathcal{M}(\Omega; R^{N \times N}))
\]
- the concentration measure appearing in the limit momentum equation.

Letting $h \to 0$ in \eqref{mom_bE} and \eqref{mom_cE} we conclude
\begin{equation} \label{M14}
\begin{aligned}
\left[ \intO{ \left<\mathcal{V}_{t,x} ; \vm \right> \cdot  \boldsymbol{\varphi}(0, \cdot) } \right]_{t = 0}^{t = \tau} &= \int_0^\tau \intO{
\Big[ \left< \mathcal{V}_{t,x}; \vm \right> \cdot \partial_t \boldsymbol{\varphi} + \left< \mathcal{V}_{t,x}; \frac{\vm \otimes \vm}{\vr} \right>
: \Grad \boldsymbol{\varphi} + \left< \mathcal{V}_{t,x}, p \right> \Div
\boldsymbol{\varphi}  \Big] } \ \dt \\
&
+ \int_0^\tau \intO{ \mu^2_C : \Grad \boldsymbol{\varphi}  } \dt
\end{aligned}
\end{equation}
for any $0 \leq \tau \leq T$, $\boldsymbol{\varphi} \in C^1([0,T] \times \Omega;R^N)$.

\subsubsection{Energy inequality for the barotropic Euler system}

In the barotropic case the energy plays the role of the entropy, cf. \eqref{entropy_bE}. A priori estimates \eqref{apriori_bE} indicate that the energy
\begin{align*}
\eta(\vU_h) = \frac{|\vm_h|^2}{2\vr_h} + P(\vr_h)
\end{align*}
is uniformly bounded in $L^{\infty}(0,T;L^1(\Omega)).$ Letting $h \to 0$ in \eqref{dis_en_ineq} for the barotropic Euler system we obtain
\begin{align*}
\left[ \intO{ \left<\mathcal{V}_{t,x} ; \eta(\vU_h(t)) \right>   } \right]_{t = 0}^{t = \tau} + \mathcal{D}(t)\leq 0,
\end{align*}
with the dissipation defect $\mathcal{D} \in L^{\infty}(0,T),$ $\mathcal{D}(t)\geq 0$, see \cite{FGSWW1} for details. Moreover,  applying~\cite[Lemma~2.1.]{FGSWW1} for
 $$F(\vU_h(t))=\intO{\frac{\vm_h(t)\otimes\vm_h(t)}{\vr_h(t)}+p_h(t)\mathbb{I}}, \quad G(\vU_h(t))=\intO{\eta(\vU_h(t))},  \ \mbox{a.a.} \ t \in (0,T) ,$$
 we get the compatibility condition \eqref{M9}, specifically
\begin{align*}
 \int_{{\Omega}} 1 \ {\rm d} |\mu^2_C| \aleq \mathcal{D} \ \mbox{a.a. in}\ (0,T).
\end{align*}

\subsubsection{Entropy inequality and energy balance for the complete Euler system}

\subsubsection*{Entropy inequality}
Due to a priori estimates the entropy pair $(\eta(\vU_h),\bfq(\vU_h))$ for the complete Euler system, cf. \eqref{entropy_cE} and \eqref{entropy_flux_cE}, is uniformly bounded in
$[L^\infty(0,T; L^{\gamma}(\Omega))] \times [L^\infty(0,T; L^r(\Omega))]^N.$
Therefore we have
\begin{align*}
\eta(\vU_h) &\to \Ov{\eta(\vU)} \ \mbox{weakly-(*) in}\ L^\infty(0,T; L^{\gamma}(\Omega)), \\
\bfq(\vU_h) &\to \Ov{\bfq(\vU)} \ \mbox{weakly-(*) in}\ L^\infty(0,T; L^r(\Omega)), \ r=\frac{2\gamma}{\gamma+1}>1.
\end{align*}
Letting $h \to 0$ in the equation \eqref{consistency_cE}, we get analogously as before,
\begin{equation} \label{M15}
\begin{aligned}
\left[ \intO{ \left<\mathcal{V}_{t,x} ; \eta(\vU) \right> \cdot  \varphi(0, \cdot) } \right]_{t = 0}^{t = \tau} &\geq \int_0^\tau \intO{
\Big[ \left< \mathcal{V}_{t,x}; \eta(\vU)\right> \cdot \partial_t \varphi + \left< \mathcal{V}_{t,x}; \bfq(\vU)\right>
\cdot \grad \varphi \Big] } \ \dt
\end{aligned}
\end{equation}
for a.a. $0 \leq \tau \leq T$, and any $\varphi \in C^1([0,T] \times \Omega),$ $\varphi\geq 0.$

\subsubsection*{Energy balance}

Equation \eqref{en_cE} of the complete Euler system  yields 
the discrete energy balance
 \begin{align}\label{M16}
 \left[ \intO{E_h(t)}\right]_{t=0}^{t=\tau}=0.
 \end{align}
  Letting $h\to 0$ in \eqref{M16} and taking into account that $\{E_h\}_{h>0}$ is uniformly bounded in $L^\infty(0,T; L^1(\Omega))$ we obtain
 \begin{align*}
 \left[ \intO{ \left<\mathcal{V}_{t,x} ; E_h(t) \right>   } \right]_{t = 0}^{t = \tau} + \mathcal{D}(t)=0,
 \end{align*}
 where $\mathcal{D} \in L^{\infty}(0,T),$ $\mathcal{D} \geq 0.$ We again apply \cite[Lemma~2.1.]{FGSWW1} for
 $$F(\vU_h(t))=\intO{\frac{\vm_h(t)\otimes\vm_h(t)}{\vr_h(t)}+p_h(t)\mathbb{I}}, \quad G(\vU_h(t))=\intO{E_h(t)},  \ \mbox{a.a.} \ t \in (0,T), $$
to get that
 \begin{align*}
 \int_{{\Omega}} 1 \ {\rm d} |\mu^2_C| \aleq \mathcal{D} \ \mbox{a.a. in}\ (0,T).
\end{align*}

Summarizing the discussion of this section we are ready to formulate the following result.

\begin{Theorem}\label{T_gcE}\ \\
Let the initial data satisfy \eqref{RiD}. Let $\vU_h=[\vr_h,\vm_h,E_h]$ be a numerical solution of the complete Euler system constructed by the schemes \eqref{scheme_cE1} or \eqref{scheme_cE}. In addition, suppose that there exist constants $\Un{\vr},$ $\Ov{E}>0$ such that
\begin{align}\label{hyp_theo_generating_cE}
\Un{\vr} \leq \vr_h, \ E_h \leq \Ov{E}, \mbox{ uniformly for} \ h  \rightarrow
 0.
\end{align}
Then $\{\vU_h\}_{h>0}$ up to a subsequence generates a Young measure $$\mathcal{V}_{t,x} \in L^{\infty}_{weak(*)}((0,T)\times \Omega, \mathcal{P}([0,\infty)\times R^N \times [0,\infty)))$$ representing a  (DMV) solution of the complete Euler system in the sense of Definition~\ref{D2}.
\end{Theorem}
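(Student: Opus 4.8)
The plan is to assemble the ingredients established in Sections~\ref{S:positive}--\ref{S:scheme} and in the preceding parts of the present section, and to verify the four structural requirements of Definition~\ref{D2} one by one. First I would invoke Corollary~\ref{Cor:cE}: the standing hypotheses \eqref{hyp_theo_generating_cE}, namely $\Un{\vr}\leq\vr_h$ and $E_h\leq\Ov{E}$ uniformly in $h$, imply all assumptions (A1)--(A4) of Lemma~\ref{L:AcE}, so that both schemes \eqref{scheme_cE1} and \eqref{scheme_cE} satisfy the consistency formulation \eqref{consistency_cE}. Simultaneously, Lemma~\ref{L} upgrades \eqref{hyp_theo_generating_cE} to the full set of uniform bounds \eqref{nerovnosti} on $\vr_h$, $\vm_h$, $\vt_h$ and $p_h$, which together with the a priori estimates \eqref{apriori_cE} furnish the weak compactness needed below.

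Next I would extract weakly-$(*)$ convergent subsequences of $[\vr_h,\vm_h,E_h]$ exactly as in Sections~\ref{young}ff.\ and apply the fundamental theorem of Young measures (Ball \cite{BALL2}, Pedregal \cite{PED1}) to generate the parametrized probability measure $\{\mathcal{V}_{t,x}\}$. The weak-$(*)$ measurability required in Definition~\ref{D2} is part of that theorem; the fact that $\mathcal{V}_{t,x}$ is supported on the correct phase space $\mathcal{F}=\{[\vr,\vm,E]\mid\vr\geq0,\ \vm\in R^N,\ E\geq0\}$ follows from $\vr_h\geq\Un{\vr}>0$ and $E_h\geq0$. I would then pass to the limit in each line of \eqref{consistency_cE}: the continuity equation yields \eqref{M12} with no concentration; the momentum equation yields \eqref{M14}, where the concentration measure is defined by
\[
\mu^2_C\defeq\Ov{\frac{\vm\otimes\vm}{\vr}+p\mathbb{I}}-\left<\mathcal{V}_{t,x};\frac{\vm\otimes\vm}{\vr}+p\mathbb{I}\right>;
\]
the renormalized entropy inequality yields \eqref{M15} for every admissible $\chi$; and the discrete energy balance \eqref{M16} yields the energy inequality with a nonnegative dissipation defect $\mathcal{D}\in L^\infty(0,T)$.

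The main obstacle is the last bullet of Definition~\ref{D2}, the domination of the concentration measure by the dissipation defect. Here I would apply \cite[Lemma~2.1]{FGSWW1} to the pair
\[
F(\vU_h(t))=\intO{\frac{\vm_h(t)\otimes\vm_h(t)}{\vr_h(t)}+p_h(t)\mathbb{I}},\qquad G(\vU_h(t))=\intO{E_h(t)},
\]
obtaining $\int_\Omega 1\,\mathrm{d}|\mu^2_C|\aleq\mathcal{D}$ for a.a.\ $t\in(0,T)$; integrating in time converts this into the quantitative bound with constant $c(N,\gamma)$ demanded in Definition~\ref{D2}. The delicate point is precisely this comparison: one must show that the concentration defect carried by the \emph{quadratic} convective flux $\vm\otimes\vm/\vr$ and by the pressure $p$ is controlled by the defect in the \emph{coercive} energy $E$. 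This is exactly where the dissipative measure--valued framework of \cite{FGSWW1,BF} is indispensable, since the energy dominates all the nonlinearities appearing in the fluxes, and it is the step that cannot be reduced to routine weak-convergence arguments.

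Finally, collecting \eqref{M12}, \eqref{M14}, the energy inequality, \eqref{M15}, and the domination bound just derived verifies every item of Definition~\ref{D2}, so that $\{\mathcal{V}_{t,x}\}$ is a (DMV) solution of the complete Euler system, completing the proof.
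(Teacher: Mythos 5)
Your proposal is correct and follows essentially the same route as the paper: it assembles Corollary~\ref{Cor:cE} and Lemma~\ref{L} for the uniform bounds and consistency, generates the Young measure via the fundamental theorem, passes to the limit in each line of \eqref{consistency_cE} to obtain \eqref{M12}, \eqref{M14}, \eqref{M15} and the energy inequality, and invokes \cite[Lemma~2.1]{FGSWW1} with the same choice of $F$ and $G$ to dominate the concentration measure $\mu^2_C$ by the dissipation defect. No substantive differences from the paper's argument.
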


Note that hypothesis \eqref{hyp_theo_generating_cE} is considerably weaker than the standard stipulation
\begin{align}\label{standard_hyp_cE}
\|\vU_h\|_{L^{\infty}} \leq C, \ 0 < \Un{\vr} \leq \vr_h, \ 0 < \Un{E} \leq E_h,
\end{align}
cf. \cite{JR,FMT16,FB05,CL96,FKMT17}. The missing piece of information between \eqref{hyp_theo_generating_cE} and \eqref{standard_hyp_cE} is provided by the careful analysis of the renormalized entropy  inequality in Section~\ref{S:positive}, see Lemma~\ref{L}.

Similar result can be shown in the context of the barotropic Euler system.

\begin{Theorem}\label{T_gbE}\ \\
Let the initial data $\vr^0,$ $\vm^0$ be as in \eqref{RiD}. Let $\vU_h=[\vr_h,\vm_h]$ be a numerical solution of the barotropic Euler system constructed by the schemes \eqref{scheme_bE1} or \eqref{scheme_bE}. In addition,
\begin{itemize}
\item if $\vU_h$ is generated by the scheme \eqref{scheme_bE1}  and $\gamma \geq 3,$ we suppose
\begin{align}\label{hyp_theo_generating_bEa}
0 < \Un{\vr} \leq \vr_h  \mbox{ uniformly for} \ h  \rightarrow 0,
\end{align}
\item if $\vU_h$ is generated by the scheme \eqref{scheme_bE1}  and $1 < \gamma < 3,$ we suppose
\begin{align}\label{hyp_theo_generating_bEa}
0 < \Un{\vr} \leq \vr_h  \leq \Ov{\vr}  \mbox{ uniformly for} \ h  \rightarrow 0,
\end{align}
\item if $\vU_h$ is generated by the scheme \eqref{scheme_bE}, we suppose
\begin{align}\label{hyp_theo_generating_bE}
0 < \Un{\vr} \leq \vr_h \leq \Ov{\vr}, \ |\vm_h| \leq \Ov{\vm}, \mbox{ uniformly for} \ h  \rightarrow 0,
\end{align}
\end{itemize}
for certain positive constants $\Un{\vr},$ $\Ov{\vr}$ and $\Ov{\vm}$.\\
Then $\{\vU_h\}_{h>0}$ up to a subsequence generates a Young measure $$\mathcal{V}_{t,x} \in L^{\infty}_{weak(*)}((0,T)\times \Omega, \mathcal{P}([0,\infty)\times R^N ))$$ representing a (DMV) solution of the barotropic Euler system in the sense of Definition~\ref{D1}.
\end{Theorem}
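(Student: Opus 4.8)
The plan is to follow the template of the proof of Theorem~\ref{T_gcE}, reusing the limit passages already carried out for the barotropic system in the preceding subsections and verifying each clause of Definition~\ref{D1}. The first task is to guarantee the consistency formulation \eqref{consistency_bE}; in each of the three regimes the boundedness hypotheses are tailored to the assumptions of Lemma~\ref{L:AbE}. For the global scheme \eqref{scheme_bE}, \eqref{hyp_theo_generating_bE} yields (A1) together with a uniform bound on the wave speed $\lambda$, hence (A3). For the local scheme \eqref{scheme_bE1}, \eqref{hyp_theo_generating_bEa} gives (A1); the constraint (A2) is nonempty only when $1<\gamma<3$, where the accompanying upper bound $\vr_h\le\Ov{\vr}$ renders the entropy Hessian strictly positive definite, while for $\gamma\ge3$ the determinant/trace computation in the proof of Lemma~\ref{L:AbE} shows positive definiteness already from (A1). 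In all cases Corollary~\ref{Cor:bE} applies and \eqref{consistency_bE} holds.

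Next I would pass to a subsequence and extract from the a priori bounds \eqref{apriori_bE} the weak-(*) limits $\vr_h\to\vr$ in $L^\infty(0,T;L^\gamma(\Omega))$ and $\vm_h\to\vm$ in $L^\infty(0,T;L^r(\Omega;R^N))$ with $r=\frac{2\gamma}{\gamma+1}$. Boundedness of the conserved quantities lets the fundamental theorem of Young measures (Ball \cite{BALL2}, Pedregal \cite{PED1}) produce the parametrized family $\{\mathcal{V}_{t,x}\}$, weakly-(*) measurable into $\mathcal{P}(\mathcal{F})$ with $\mathcal{F}=[0,\infty)\times R^N$; this is the first clause of Definition~\ref{D1}. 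The continuity equation passes to the limit exactly as in \eqref{M12}, giving \eqref{M6} with vanishing concentration $\mu^1_C=0$, because $\vm_h$ converges weakly and the numerical diffusion contributes only $\mathcal{O}(h)$. In the momentum balance the nonlinear fluxes $\frac{\vm_h\otimes\vm_h}{\vr_h}$ and $p_h=a\vr_h^\gamma$ are merely bounded in $L^\infty(0,T;L^1)$, so they converge weakly-(*) to measures; setting $\mu^2_C$ to be the difference between this limit and the barycentre $\left<\mathcal{V}_{t,x};\frac{\vm\otimes\vm}{\vr}+p\mathbb{I}\right>$ reproduces \eqref{M14}, i.e.\ the momentum clause of Definition~\ref{D1}.

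It remains to handle the energy inequality and the domination condition. For barotropic flow the energy $\eta(\vU_h)=\frac{1}{2}\frac{|\vm_h|^2}{\vr_h}+P(\vr_h)$ plays the role of entropy, so passing to the limit in the discrete inequality \eqref{dis_en_bE} produces \eqref{M8} with a nonnegative dissipation defect $\mathcal{D}\in L^\infty(0,T)$. The main obstacle --- the only step beyond routine limiting --- is the compatibility condition \eqref{M9}, namely that $\mu^2_C$ be dominated by $\mathcal{D}$. Here I would invoke Lemma~2.1 of \cite{FGSWW1} with $F(\vU_h)=\intO{\frac{\vm_h\otimes\vm_h}{\vr_h}+p_h\mathbb{I}}$ and $G(\vU_h)=\intO{\eta(\vU_h)}$: the pointwise control $\left|\frac{\vm\otimes\vm}{\vr}+p\mathbb{I}\right|\aleq\eta(\vU)$ (the kinetic part is bounded by twice the kinetic energy and $p=a\vr^\gamma$ by the potential $P$) lets the lemma conclude $\int_\Omega 1\,\D|\mu^2_C|\aleq\mathcal{D}$ for a.a.\ $t$. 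Assembling the measurability, \eqref{M12}, \eqref{M14}, the energy inequality \eqref{M8}, and this domination bound verifies every requirement of Definition~\ref{D1}, so $\{\mathcal{V}_{t,x}\}$ is a (DMV) solution of the barotropic Euler system.
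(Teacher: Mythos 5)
Your proposal matches the paper's own argument: the paper proves Theorem~\ref{T_gbE} by exactly the route you describe, namely verifying the consistency formulation \eqref{consistency_bE} through Lemma~\ref{L:AbE} and Corollary~\ref{Cor:bE} under the three boundedness regimes, then reusing the limit passages of Section~7 (weak-(*) limits, Young measure generation, \eqref{M12}, \eqref{M14}, the energy inequality with dissipation defect, and Lemma~2.1 of \cite{FGSWW1} with the same choices of $F$ and $G$ for the domination condition \eqref{M9}). The proposal is correct and essentially identical in structure to the paper's proof.
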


It should be pointed out that for the barotropic Euler system the only available mathematical entropy is the energy, and in addition, its flux can  not be controlled in the asymptotic limit for $h \rightarrow 0$ unless we assume \eqref{hyp_theo_generating_bE}.

\subsection{Convergence to regular solution}

We have proven that the numerical solutions $\{\vU_h\}_{h>0}$  to \eqref{scheme_bE} and \eqref{scheme_cE} for the  barotropic and the complete Euler system converges to the dissipative measure--valued solution defined in Definition~\ref{D1} and Definition~\ref{D2}, respectively.
Employing the corresponding (DMV)-strong uniqueness results from \cite{GSWW} and \cite{BF} we can show the strong convergence to the strong solution of the system on its lifespan.

\begin{Theorem}\label{T_ccE}\ \\
Suppose that the
approximate solutions $\{\vU_h\}_{h>0}$  to  \eqref{scheme_cE1} or \eqref{scheme_cE} for the complete Euler system generate
a (DMV) solution in the sense of Definition~\ref{D2}.
In addition, let the Euler equations  \eqref{F5} possess the unique strong (continuously differentiable) solution $\vU=[\vr,\vm,E]$,
emanating form the initial data \eqref{RiD}. \\
Then
\begin{align*}
\vU_h \to \vU \ \mbox{strongly in} \ L^1((0,T) \times \Omega;  \mathcal{P}([0,\infty)\times R^N \times [0,\infty))).
\end{align*}
More precisely,
\begin{equation}\label{PP1}
 \begin{aligned}
\vr_h &\to \vr \ \mbox{weakly-(*) in} \ L^{\infty}(0,T;L^\gamma(\Omega))
\ \mbox{and strongly in}\ L^1((0,T) \times \Omega) \\
\vm_h &\to \vm \ \mbox{weakly-(*) in} \ L^{\infty}(0,T;L^{2\gamma/(\gamma-1)}(\Omega)) \ \mbox{and strongly in}\ L^1((0,T) \times \Omega; R^N)), \\
E_h &\to E \ \mbox{weakly-(*) in} \ L^{\infty}(0,T;L^1(\Omega)) \ \mbox{and strongly in}\ L^1((0,T) \times \Omega).
\end{aligned}
\end{equation}
\end{Theorem}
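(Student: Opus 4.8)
The plan is to combine the generation result of Theorem~\ref{T_gcE} with the (DMV)--strong uniqueness principle of Proposition~\ref{P2}, and then to upgrade the resulting weak convergence to strong convergence by exploiting the fact that the limiting Young measure collapses to a Dirac mass. First I would identify the initial Young measure: since the data $\vr^0, \vm^0, E^0$ in \eqref{RiD} are continuous and the numerical data are their projections $\Pi_h$, the initial states converge strongly, so the generated measure at $t=0$ is $\mathcal{V}_{0,x} = \delta_{\vr^0(x), \vm^0(x), E^0(x)}$. Writing $\vm^0 = \vr^0 \vu^0$ and $E^0 = \frac{1}{2}\vr^0 |\vu^0|^2 + \vr^0 e(\vr^0, \vt^0)$ with $\vt^0$ as in the preceding text, this is exactly the Dirac initial datum required in Proposition~\ref{P2}.

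Next I would invoke the uniqueness principle. The perfect--gas relations \eqref{pressure_cE} satisfy the structural hypotheses \eqref{GIB}, \eqref{F4} and \eqref{F13}, and by assumption the system \eqref{F5} admits a unique continuously differentiable solution $\vU = [\vr, \vm, E]$ on $[0,T]$ emanating from these data. Since $\{\vU_h\}$ generates a (DMV) solution with the Dirac initial datum above, Proposition~\ref{P2} yields
$\mathcal{V}_{t,x} = \delta_{\vr(t,x), \vm(t,x), E(t,x)}$ for a.a. $(t,x) \in (0,T)\times\Omega$.
Moreover, because the smooth solution conserves the total energy, the Dirac structure forces the dissipation defect $\mathcal{D}$ to vanish, and hence, by the domination estimate in Definition~\ref{D2} (the analogue of \eqref{M9}), the concentration measure $\mu_C$ vanishes as well; the same conclusion is built into the relative--energy argument underlying Proposition~\ref{P2}. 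Thus no oscillation or concentration survives in the limit.

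Finally I would convert this Dirac structure into the strong convergence claimed in \eqref{PP1}. The weak-$(*)$ limits established above, together with the fundamental theorem of Young measures, give $\langle \mathcal{V}_{t,x}; \vU \rangle = \vU(t,x)$, which is precisely the weak-$(*)$ convergence of $\vr_h, \vm_h, E_h$ in the stated spaces. Because $\mathcal{V}_{t,x}$ is a Dirac mass, its spread vanishes, i.e. $\langle \mathcal{V}_{t,x}; |\vU - \langle \mathcal{V}_{t,x}; \vU\rangle| \rangle = 0$ a.e., which encodes convergence in measure of $\vU_h$ to $\vU$. The uniform bounds \eqref{apriori_cE}, sharpened by Lemma~\ref{L}, supply the equiintegrability needed to upgrade convergence in measure to strong $L^1((0,T)\times\Omega)$ convergence of each component.

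The step I expect to be the main obstacle is precisely this last one: ensuring that the Dirac structure of $\mathcal{V}_{t,x}$ delivers genuine strong convergence rather than mere weak convergence, which requires ruling out concentration in the limit. This hinges on the identity $\mathcal{D} = 0$, extracted from energy conservation of the strong solution together with the energy inequality of Definition~\ref{D2}; once $\mathcal{D}=0$ is in hand, the control of $\mu_C$ and the equiintegrability furnished by the energy bounds make the passage from convergence in measure to $L^1$ convergence routine.
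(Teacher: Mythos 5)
Your proposal is correct and follows essentially the same route as the paper, which itself gives only a terse argument: apply the generation result (Theorem~\ref{T_gcE}) to obtain a (DMV) solution with Dirac initial datum, invoke the (DMV)--strong uniqueness principle (Proposition~\ref{P2}) to collapse the Young measure onto the strong solution, and use energy conservation of the strong solution to kill the dissipation defect $\mathcal{D}$ and hence the concentration measure, which is exactly the content of the paper's remark following the theorem. Your final step (Dirac structure $\Rightarrow$ convergence in measure, upgraded to $L^1$ via $\mathcal{D}=0$ and the equiintegrability coming from the a priori bounds) is the standard way to make the paper's one-line justification precise.
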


\begin{Remark}
Recall that the strong solution of the complete Euler system conserves energy, in particular, the dissipation defect $\mathcal{D}$, and, accordingly, the concentration
measure $\mu^2_C$ vanish. This also justifies the strong convergence of the total energy claimed in (\ref{PP1}).
\end{Remark}

In contrast with Theorem~\ref{T_gcE} the results stated in Theorem~\ref{T_ccE} is unconditional provided that:
\begin{itemize}
\item the limit system admits a smooth solution.
\item  the numerical solution generates a (DMV) solution.
\end{itemize}

Exactly the same result can be obtained for the barotropic Euler system \eqref{M1}  and the entropy stable finite volume schemes \eqref{scheme_bE1} and \eqref{scheme_bE}.

\section*{Conclusions}

 We have shown  convergence of the Lax-Friedrichs-type finite volume schemes for multidimensional barotropic and complete Euler equations.
Since multidimensional Euler equations are ill-posed in the class of weak solutions for $L^\infty$-initial data \cite{FKKM17}, we propose here to investigate the convergence in the class of  dissipative measure--valued (DMV) solutions. The latter  has been introduced for the Euler equations recently in \cite{BF, BreFei17B, FKKM17}, see also the related works on the (DMV) solutions of the compressible Navier-Stokes equations \cite{FL17, FGSWW1}. The (DMV) solutions represent the most general class of solutions that still satisfy the weak--strong uniqueness property. Thus, if the strong solution exists the (DMV) solution coincides with the strong one on its lifespan, cf.~\cite{GSWW} and \cite{BF} for the barotropic and complete Euler equations, respectively.

We build on the concept of entropy stable schemes that has been introduced  by Tadmor \cite{Tad87}, see also \cite{Tad03} and the references therein. We work here with the Lax-Friedrichs-type finite volume schemes (\ref{scheme_bE1}), (\ref{scheme_bE}) and (\ref{scheme_cE1}), (\ref{scheme_cE})
that are  entropy stable.  Furthermore, using some refined a priori estimates
for the numerical solutions we have shown  consistency of our entropy stable schemes. More precisely, assuming only strict positivity of the density and the upper bound on the energy we have proven the consistency for the complete Euler system, cf.~Corollary~\ref{Cor:cE}.
On the other hand, the consistency of the local Lax-Friedrichs scheme
(\ref{scheme_bE1}) for barotropic Euler equations with $\gamma \geq 3$ can be obtained assuming only the strict positivity of density, cf.~Lemma~\ref{L:AbE}.
In Theorems~\ref{T_gcE}, \ref{T_gbE} we have shown that numerical solutions given by the Lax-Friedrichs-type finite volume schemes generate the Young measure representing (DMV) solutions of the complete and barotropic Euler equations, respectively. Employing the corresponding (DMV)-- strong uniqueness results we have shown in Theorem~\ref{T_ccE} the strong convergence to the strong solution of the complete Euler system on its lifespan. Analogous strong convergence result holds for the barotropic Euler equations, too.


\end{document}